\theoremstyle{plain}
\newtheorem{thm}{Theorem}[section]
\newtheorem{lemma}[thm]{Lemma}
\newtheorem{prop}[thm]{Proposition}
\newtheorem{cor}[thm]{Corollary}
\theoremstyle{definition}
\newtheorem{defn}[thm]{Definition}
\def\R{\mathbb{R}}
\def\F{\mathbb{F}}
\def\Z{\mathbb{Z}}
\def\N{\mathbb{N}}
\def\c{\cdot}
\title{Classification of braces of cardinality $p^4$}
\author{Dora  Pulji\'{c}\thanks{School of Mathematics, JCMB, The King's Buildings, University of Edinburgh, EH9 3BF, d.puljic@sms.ed.ac.uk} }
\begin{document}

\maketitle

\begin{abstract}

 We classify nilpotent pre-Lie rings of cardinality $p^4$ and thereby braces of the same cardinality, for a sufficiently large prime $p$. It has been shown that nilpotent pre-Lie rings of cardinality $p^n$ correspond to strongly nilpotent braces of the same cardinality, for sufficiently large $p$. These braces are explicitly obtained from the corresponding pre-Lie rings by the construction of the group of flows. Not right nilpotent braces of cardinality $p^4$ have been classified, hence our results finish the classification of braces of cardinality $p^4$.

\end{abstract}
\textbf{Keywords:} Pre-Lie ring, Pre-Lie algebra, Brace, Yang-Baxter equation, Involutive non-degenerate set-theoretical solution

\textbf{MSC:} 81R50, 16T25, 20D15

\tableofcontents
\section{Introduction}
 The connection between pre-Lie algebras over $\R$ and left nilpotent $\R$-braces was described in \cite{MR3291816}, along with a method for constructing a brace from a pre-Lie algebra. This connection was further investigated in \cite{MR4353236}, where a formula for the correspondence between strongly nilpotent $\F_p$-braces and nilpotent pre-Lie algebras over $\F_p$ was given. Further, in \cite{passage}, it was shown that nilpotent pre-Lie rings of cardinality $p^n$ correspond to strongly nilpotent braces of the same cardinality, for $p>n+1$. These braces have the same additive group as the pre-Lie algebra and can be explicitly calculated from the pre-Lie algebra by constructing the group of flows. Every brace $A$ (not necessarily right nilpotent) gives rise to a pre-Lie ring related to the factor brace $A/ann(p^2)$ \cite{shalev2022braces}, and this construction is reversible \cite{smoktunowicz2022prelie}. It is not clear whether every brace of cardinality $p^4$ corresponds to a pre-Lie ring. 

It follows that by characterising nilpotent pre-Lie rings of cardinality $p^n$ we achieve the classification of strongly nilpotent braces of cardinality $p^n$, for a sufficiently large prime $p$. Cyclic braces were classified in \cite{rump2007classification, rump2019classification}, braces of cardinality $pq$ have been classified in \cite{acri2020skew} and of cardinality $p^2q$ in \cite{dietzel2021braces}, braces of cardinality $p^3$ have been classified in \cite{MR3320237}, skew braces of cardinality $p^3$ have been described in \cite{nejabati2018hopf}, and all not right nilpotent $\mathbb F_{p}$-braces of cardinality $p^{4}$ were described in \cite{puljic2021braces}. It was shown in \cite{puljic} that all braces of cardinality $p^4$, apart from those constructed in \cite{puljic2021braces}, are right nilpotent. It follows that the remaining braces of cardinality $p^4$ to be classified are right nilpotent, for which there is a passage to nilpotent pre-Lie rings. Nilpotent pre-Lie algebras of cardinality $p^4$ over the field $\F_p$ generated by $1$ element have been classified in \cite{MR4353236}, and thereby strongly nilpotent $\F_p$-braces generated by $1$ element of the same cardinality. In this paper we classify the remaining nilpotent pre-Lie rings of cardinality $p^4$, and hence we finish the classification of braces of cardinality $p^4$.

\section{Preliminaries}

\begin{defn}
    A  \textbf{brace} is a triple $(A, +,\circ)$ where $(A,+)$ is an abelian group, $(A,\circ)$ is a group and 
\[a\circ(b+c)+a=a\circ b+a\circ c\]
for all $a,b,c\in A$.
\end{defn}
 The star operation  $\ast$ is  defined  as \begin{equation*}
\label{star}
    a \circ b = a \ast b + a + b.
\end{equation*}
Then, equivalently, a brace is a triple $(A, +,\ast)$ where $(A,+)$ is an abelian group, $(A,\ast)$ is a group and 
\[a\ast(b+c)=a\ast b+a\ast c\]
for all $a,b,c\in A$. We will refer to $(A,\circ)$ as the multiplicative group of the brace.

A brace $A$ is \textbf{left nilpotent} if there exists $n\in\N$ such that $A^{n}=0$, where $A^{i+1}=A\ast A^i$ and $A^1=A$. A brace is \textbf{right nilpotent} if there exists $n\in\N$ such that $A^{(n)}=0$, where $A^{(i+1)}=A^{(i)}\ast A$ and $A^{(1)}=A$. A brace is \textbf{strongly nilpotent} if there exists $n\in\N$ such that $A^{[n]}=0$, where $A^{[i+1]}=\sum_{j=1}^{i}A^{[j]}*A^{[i+1-j]}$ and $A^{[1]}=A$. The smallest such $n$ is called the nilpotency index of $A$.

\begin{defn}
   A \textbf{pre-Lie algebra} is a triple $(A,+,\c)$ consisting of a $k$-vector space $A$ with a binary operation $(x,y)\mapsto x\c y$ such that 
\[(a\c b)\c c -a\c(b\c c) = (b\c a)\c c - b\c(a\c c),\]
\[(ia+jb)\c c = i(a\c c) +j (b\c c)\quad \text{ and }\quad a\c (ib+jc) = i(a\c b)+j(a\c c)\]
for all $a,b,c\in A$ and $i,j\in k$.

\end{defn}

\begin{defn}
   A \textbf{pre-Lie ring} is a triple $(A,+,\c)$ consisting of a $\Z$-module $A$  with a binary operation $(x,y)\mapsto x\c y$ such that 
\[(a\c b)\c c -a\c(b\c c) = (b\c a)\c c - b\c(a\c c),\]
\[a\c(b+c)=a\c b+a\c c \quad \text{ and }\quad (a+b)\c c = a\c c+ b\c c\]
for all $a,b,c\in A$.
\end{defn}

We say that a pre-Lie ring is \textbf{nilpotent} (or strongly nilpotent) if for some $n\in\N$ any product of $n$ elements is zero. We denote the radical chains in pre-Lie rings the same way as in braces, but using the binary operation of the pre-Lie ring.

	\begin{defn}
	 	Let $ \mathbb{F} $ be a field. We say that a left brace $ A $ is an $ \mathbb{F} $-\textbf{brace} if its additive group is an $ \mathbb{F} $-vector space such that \[ a \ast(\alpha b) = \alpha (a\ast b) \] for all $ a, b \in A $  and $ \alpha \in \mathbb{F} $. 
	\end{defn}

Note that an $\F_p$-brace of cardinality $p^n$ is a brace with additive group $C_p^n$.

\subsection{From pre-Lie rings to braces}\label{method}

In this section we give justification for claiming that our characterisation of pre-Lie rings gives a complete description of braces of cardinality $p^4$.

Again, note that in \cite{puljic} it was shown that all braces of cardinality $p^{4}$ are right nilpotent, except for  braces which were constructed in \cite{puljic2021braces}. Therefore it remains to describe all braces of cardinality $p^{4}$ which are strongly nilpotent. The classification of strongly nilpotent braces of cardinality $p^4$, for a prime $p$ such that $p>5^5$, is achieved through the classification of nilpotent pre-Lie rings of cardinality $p^4$. The reasoning is as follows.
\begin{enumerate}
    \item Braces of cardinality $p^4$ are left nilpotent \cite{rump2007braces}. Left nilpotent braces that are right nilpotent are strongly nilpotent \cite{MR3814340}.
    \item By Corollary $19$ from \cite{passage}, if $A$ is a  strongly nilpotent brace of cardinality $p^{n}$ for some $n$, then  the nilpotency index $k$ of $A$ satisfies  $k<(n+1)^{n+1}$. Therefore all strongly nilpotent braces of cardinality $p^{4}$ have nilpotency index $k<5^{5}$.
    \item If $p>5^{5}$, then  all strongly nilpotent braces of cardinality $p^{4}$ have nilpotency index $k<5^{5}<p-1$. Such braces satisfy assumptions from Theorem $6$ in \cite{passage}.
    \item  Further, by Theorem $6$ from  \cite{passage} the formula \[a\cdot b=-(1+p+\ldots +p^{p})\sum_{i=0}^{p-1}\xi ^{p-1-i}((\xi ^{i}a)*b)\] gives a pre-Lie ring $(A, +, \cdot )$ from a brace $(A,+,\ast)$.
    \item  The brace $(A, +, \ast )$ can be recovered from the obtained pre-Lie ring  by applying the construction of the group of flows to the pre-Lie ring $(A, +, \cdot)$ (see the main result from  \cite{passage}). 
    \item Therefore, for $p>5^{5}$,  every strongly nilpotent brace of cardinality $p^{4}$, is obtained by applying the construction of group of flows to some powerful pre-Lie ring (which is also strongly nilpotent). 

\end{enumerate}

Now, given a pre-Lie ring we construct the corresponding brace as follows (described in \cite{MR3291816}).

\begin{enumerate}

\item Let $(A, +, \cdot )$ be a pre-Lie ring of cardinality $p^{n}$ for some prime number $p$ and some $n\in \N$. By Lemma $11$ from \cite{passage}, every element of $A$ can be written as $W(x)$ for some $x\in A$. 
Recall that \[W(x)=x+{\frac 12} x\cdot x+{\frac 1{3!}}x\cdot (x\cdot x)+ {\frac 1{4!}}x\cdot (x\cdot (x\cdot x))+\cdots .\] 
For $x,y\in A$ we define 
 \[W(x)*W(y)=x\cdot W(y)+{\frac 1{2!}}x\cdot (x\cdot W(y))+ {\frac 1{3!}}x\cdot (x\cdot ( x\cdot  W(y)))+\cdots .\]
 
\item We further define \[W(x)\circ W(y)=W(x)*W(y)+W(x)+W(y).\] 
Then $(A, +, \circ )$ is the brace corresponding to the pre-Lie ring $(A, +, \cdot)$.

\end{enumerate}

This construction can be also described using the construction of the group of flows from \cite{MR579930}. See \cite{passage} for a detailed description.

Note that in the case where a pre-Lie algebra $(A, +, \cdot )$ is such that $A^{[4]}=0$, the construction of the related brace is simpler. Following Example on page 4 of \cite{MR4391819}, for a pre-Lie algebra $(A, +, \cdot )$ with $A^{[4]}=0$ and $a,b\in A$ we define
\[a\circ b=a+b+a\cdot b-{\frac 12}(a\cdot a)\cdot b+{\frac 12} a \cdot (a\cdot b).\]
  Then $(A, +, \circ )$ gives a brace which corresponds to this pre-Lie algebra. This formula gives the same brace as described in \cite{MR3291816}.

\section{Classification}
As aforementioned, the current next step in the classification of braces is that of braces of cardinality $p^4$. Braces with additive group $C_{p^4}$, also called cyclic braces, were classified in \cite{rump2007classification, rump2019classification}. Braces with additive group $C_p^4$ that are not right nilpotent were classified in \cite{puljic2021braces}. Right nilpotent braces with additive group $C_p^4$ that are generated by $1$ element were described in \cite{MR4353236}. The remaining braces, with additive groups $C_p\times C_{p^3}$ and $C_{p^2}\times C_{p^2}$, were shown to be right nilpotent in \cite{puljic}. The characterisation of the remaining braces - right nilpotent braces with additive group $C_p^4$ that are generated by more than $1$ element and braces with additive groups $C_p\times C_{p^3}$ and $C_{p^2}\times C_{p^2}$, is obtained through the description of the corresponding pre-Lie rings or pre-Lie algebras and application of section \ref{method}.

For the remainder of this paper we assume $p$ is a prime number with $p>5^5$.

\subsection{Nilpotent pre-Lie algebras with additive group $C_p^4$}\label{cp4}
 
One generator nilpotent  pre-Lie algebras with additive group $C_p^4$  have been classified  in \cite{MR4353236}, and thereby one generator strongly nilpotent $\F_p$-braces of cardinality $p^4$. We continue the classification of nilpotent pre-Lie algebras with additive group $C_p^4$ with more than one generator using similar methods. Result \cite[Lemma 10]{MR4353236} is used in the reasoning of the propositions below. We denote the additive group of the pre-Lie algebra $A$ by $(A,+)$.

\subsubsection{Pre-Lie algebras $A$ with $A^{[2]}\neq 0$ and $A^{[3]}=0$}

\begin{prop}\label{gens}
    Let $A$ be a pre-Lie algebra with $(A,+)\cong C_p^4$ that is generated by $2$ elements $x$ and $y$ as a pre-Lie algebra. If $A^{[2]}\neq 0$ and $A^{[3]}=0$, then we have the following:
    \begin{enumerate}
        \item $A$ is an $\F_p$-vector space generated by $\{x,y,z,w\}$, where $z,w\in A^{[2]}$ and $z\not\in \F_p(w)$.
        \item For $i,j\in \{x,y\}$ we have
        \[i\c j = \alpha_{ij}z+\beta_{ij}w,\]
        where at least two pairs $(\alpha_{ij}, \beta_{ij})$ do not equal $(0,0)$, and are not multiples of each other.
    \end{enumerate}
\end{prop}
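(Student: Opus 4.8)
The plan is to use nilpotency to pin down the structure of $A$ as a vector space, and then bootstrap from the generation hypothesis. First I would observe that since $A^{[3]} = 0$, any product of three generators vanishes, so $A^{[2]} = A \ast A$ is spanned by the four products $x\cdot x$, $x\cdot y$, $y\cdot x$, $y\cdot y$, and moreover $A^{[2]}$ is a central ideal in the sense that $A^{[2]}\cdot A = A\cdot A^{[2]} = 0$. Because $A$ is generated by $x$ and $y$ as a pre-Lie algebra, and all higher products die, $A$ is spanned as an $\F_p$-vector space by $\{x, y\} \cup A^{[2]}$. Since $(A,+) \cong C_p^4$ has dimension $4$, this forces $\dim A^{[2]} \le 2$; and $A^{[2]} \ne 0$ gives $\dim A^{[2]} \in \{1,2\}$.

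The main point is to rule out $\dim A^{[2]} = 1$. Here I would argue by contradiction: if $A^{[2]} = \F_p(z)$ is one-dimensional, then $\{x,y,z\}$ spans a $3$-dimensional space, contradicting $\dim A = 4$ — unless some further generator is needed, but no further generators arise because all products land in $A^{[2]}$, which is already accounted for. More carefully, the subspace $\langle x, y, z\rangle$ is closed under the pre-Lie product (any product of two of its elements lies in $A^{[2]} = \F_p(z)$, and products involving $z$ vanish), hence is a sub-pre-Lie-algebra containing the generators, so it equals $A$; but it has dimension at most $3 < 4$, a contradiction. Therefore $\dim A^{[2]} = 2$, and we may choose a basis $z, w$ of $A^{[2]}$ with $z \notin \F_p(w)$. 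Then $\{x, y, z, w\}$ spans a $4$-dimensional space, hence is a basis of $A$; this gives part (1), and the fact that $z, w \in A^{[2]}$ by construction. For part (2), each $i\cdot j$ with $i,j \in \{x,y\}$ lies in $A^{[2]} = \F_p z \oplus \F_p w$, so $i\cdot j = \alpha_{ij}z + \beta_{ij}w$ for scalars $\alpha_{ij}, \beta_{ij} \in \F_p$; this is just the definition of the coefficients.

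It remains to check the nondegeneracy assertion in part (2): at least two of the four pairs $(\alpha_{ij},\beta_{ij})$ are nonzero and not scalar multiples of one another. This follows because the four vectors $i\cdot j$ span $A^{[2]}$ (as $A$ is generated by $x,y$ and $A^{[3]}=0$, so $A^{[2]} = A\ast A$ is their span), and $\dim A^{[2]} = 2$; a spanning set of a $2$-dimensional space must contain two linearly independent vectors, which is exactly the stated condition. I expect this last span argument — carefully justifying that $A^{[2]}$ is spanned precisely by the products of the two generators, using the pre-Lie identity to handle nested products and $A^{[3]} = 0$ to kill everything of higher degree — to be the only slightly delicate point; the rest is bookkeeping with dimensions. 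One should also invoke \cite[Lemma 10]{MR4353236} as needed to ensure the generation-by-$\{x,y\}$ hypothesis really does yield the spanning set $\{x,y\}\cup A^{[2]}$ in the pre-Lie (non-associative) setting.
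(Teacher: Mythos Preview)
Your overall strategy is essentially the paper's, and your treatment of part~(2) --- observing that the four products $i\cdot j$ must span the two-dimensional space $A^{[2]}$, hence two of them are linearly independent --- is in fact more explicit than what the paper writes. There is, however, a slip in the dimension count for part~(1).

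From $A = \F_p x + \F_p y + A^{[2]}$ and $\dim A = 4$ you infer $\dim A^{[2]} \le 2$, but the inequality runs the other way: this sum only gives $4 \le 2 + \dim A^{[2]}$, so $\dim A^{[2]} \ge 2$. Your paragraph ruling out $\dim A^{[2]} = 1$ is therefore redundant, while the case that actually needs excluding is $\dim A^{[2]} \ge 3$, equivalently that the images of $x$ and $y$ in $A/A^{[2]}$ are linearly independent. This is precisely the point the paper isolates with the remark that $x\notin A^{[2]}$ (else $x$ would fail to be a genuine generator, or $x=0$), and likewise for $y$. A clean way to close the gap: if some nontrivial combination $\alpha x + \beta y$ lay in $A^{[2]}$, say with $\beta\neq 0$, then $y \equiv c x \pmod{A^{[2]}}$ for some $c$, and since $A^{[2]}\cdot A = A\cdot A^{[2]}=0$ all four products $i\cdot j$ become scalar multiples of $x\cdot x$, forcing $\dim A^{[2]}\le 1$ --- contradicting $\dim A^{[2]}\ge 2$. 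Once this is in hand, your assertion that $\{x,y,z,w\}$ is a basis is justified and the remainder of your plan goes through.
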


\begin{proof}
    Suppose $A$ is an $\F_p$-vector space generated by $\{x,y,z,w\}$, for some $z,w\in A$. Notice that $x\not\in A^{[2]}$, as in that case either $x$ is not a generator of $A$ or $x=0.$ Similarly, $y\not\in A^{[2]}$. Now, suppose $z\not\in A^{[2]}$, so $z = a_1 x+a_2 y+a_3 x^2 +\dots$ for some $a_i\in \F_p$. Then either $z\in A^{[2]}$ or $z$ is linearly dependent on $x$ and $y$, and hence not a basis element. It follows that $z,w\in A^{[2]}$, and $z$ and $w$ are not linearly dependent. It follows that least two pairs $(\alpha_{ij}, \beta_{ij})$ do not equal $(0,0)$.
\end{proof}

\begin{prop}
Let $A$ be a pre-Lie algebra with $(A,+)\cong C_p^4$ that is generated by $3$ elements $x, y$ and  $z$ as a pre-Lie algebra. Then we have the following:
\begin{enumerate}
    \item $A^{[2]}\neq 0$ and $A^{[3]}=0$.
    \item $A$ is generated by $\{x,y,z,w\}$ as a vector space with $w\in A^{[2]}$.
    \item $A^{[2]}={\mathbb F}_p w$.
    \item For $i,j\in \{x,y\}$ we have
        \[i\c j = \alpha_{ij}w,\]
        where at least one $\alpha_{ij}\neq 0$.
    
\end{enumerate}
 
\end{prop}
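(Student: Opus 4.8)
The idea is to run essentially the same argument as in Proposition \ref{gens}, but now exploiting that $A$ has \emph{three} pre-Lie generators while still being $4$-dimensional, which forces the commutator structure to collapse onto a $1$-dimensional subspace. First I would establish (1). Since $A$ is nilpotent, $A^{[2]}\neq 0$ would fail only if all products of generators vanish, making $A$ abelian as a pre-Lie ring; but then $A=\F_p x+\F_p y+\F_p z$ would be $3$-dimensional, contradicting $(A,+)\cong C_p^4$. So $A^{[2]}\neq 0$. For $A^{[3]}=0$: if $A^{[3]}\neq 0$ then, as $A$ is nilpotent, the chain $A\supsetneq A^{[2]}\supsetneq A^{[3]}\supsetneq 0$ (strict by nilpotency and $A^{[j+1]}\subseteq A^{[j]}$, using \cite[Lemma 10]{MR4353236}) would give $\dim A\geq \dim(A/A^{[2]}) + \dim(A^{[2]}/A^{[3]}) + \dim A^{[3]}\geq 3+1+1=5$ once one checks $\dim(A/A^{[2]})\geq 3$ — which holds because $x,y,z$ are linearly independent modulo $A^{[2]}$ (none of them lies in $A^{[2]}$, else it would be redundant as a generator or zero, exactly as in the proof of Proposition \ref{gens}, and no nontrivial combination lies in $A^{[2]}$ either). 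This contradicts $\dim A=4$, so $A^{[3]}=0$.

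Next, (2) and (3) follow from a dimension count. Having shown $x,y,z$ are linearly independent mod $A^{[2]}$ and that $A^{[2]}\neq 0$, we get $\dim(A/A^{[2]})=3$ and hence $\dim A^{[2]}=1$; pick $0\neq w\in A^{[2]}$, so $A^{[2]}=\F_p w$ and $\{x,y,z,w\}$ is a vector-space basis of $A$. This is item (3), and item (2) is immediate. For (4): since $A^{[3]}=0$, for any $i,j,k$ among the generators we have $(i\c j)\c k = i\c(j\c k)=0$, so $A^{[2]}$ is already spanned by the products $i\c j$ with $i,j\in\{x,y,z\}$. But wait — the statement restricts to $i,j\in\{x,y\}$, so I need to argue that the products involving $z$ do not contribute anything new, i.e. that $z\c i$ and $i\c z$ can be absorbed. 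This is the only subtle point and is where I expect to lean on \cite[Lemma 10]{MR4353236}: presumably the lemma lets one change the generating set or shows that $z$ may be taken in $A^{[2]}$-related form, or more likely the proposition is implicitly working up to a change of generators so that $z$ can be chosen with $z\c i = i\c z = 0$ for $i\in\{x,y,z\}$; alternatively, since only the pre-Lie subring generated by $x,y$ together with $z$ must be all of $A$, and $A^{[2]}$ is $1$-dimensional, one shows $z$ can be replaced by a generator whose products with everything lie in $\F_p w$ in a way already realized by $x\c y$ etc. In any case, once $A^{[2]}=\F_p w$ is spanned by $\{i\c j : i,j\in\{x,y\}\}$ we get $i\c j=\alpha_{ij}w$ and at least one $\alpha_{ij}\neq 0$ since $A^{[2]}\neq 0$.

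The main obstacle, then, is item (4): cleanly justifying that after a suitable choice of the three generators the entire multiplication table is determined by the four scalars $\alpha_{xx},\alpha_{xy},\alpha_{yx},\alpha_{yy}$, with $z$ playing the role of a ``free'' generator whose products are trivial (or absorbed). I would handle this by first writing $z\c i=\gamma_i w$, $i\c z=\delta_i w$, $z\c z=\varepsilon w$ for scalars in $\F_p$, then replacing $z$ by $z' = z + (\text{linear combination of } x,y)$ chosen so that the new $\gamma'_i,\delta'_i,\varepsilon'$ vanish — this is a finite linear system over $\F_p$, and the pre-Lie identity restricted to $A^{[3]}=0$ imposes no obstruction to solving it because all triple products already vanish. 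The remaining verification that $\{x,y,z',w\}$ still generates $A$ and still satisfies $A^{[3]}=0$ is routine. I should double-check that the pre-Lie relation $(i\c j)\c k - i\c(j\c k) = (j\c i)\c k - j\c(i\c k)$ is automatically satisfied here: every term lies in $A^{[3]}=0$, so it holds trivially, which means no symmetry constraint is imposed on the $\alpha_{ij}$ beyond what is stated.
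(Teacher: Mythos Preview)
Your treatment of items (1)--(3) is correct and essentially coincides with the paper's argument: both use that $x,y,z$ are independent modulo $A^{[2]}$ (otherwise a generator would be redundant), forcing $\dim(A/A^{[2]})\geq 3$, hence $\dim A^{[2]}=1$, hence $A^{[3]}=0$; then $A^{[2]}=\F_p w$ for any nonzero $w\in A^{[2]}$.

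Item (4) is where both proofs have trouble. The paper's own proof simply does not address it: it stops after establishing (1)--(3), so there is nothing to compare against. You correctly flagged that restricting to $i,j\in\{x,y\}$ is suspicious, since products involving $z$ need not vanish. However, your proposed fix---replacing $z$ by $z'=z+\alpha x+\beta y$ so that all products with $z'$ die---does not work. You have only two free parameters $\alpha,\beta$, but at least four linear conditions ($z'\cdot x=z'\cdot y=x\cdot z'=y\cdot z'=0$) plus a quadratic one ($z'\cdot z'=0$). Concretely, take $A=\F_p x\oplus\F_p y\oplus\F_p z\oplus\F_p w$ with $x\cdot x=y\cdot y=z\cdot z=w$ and all other products of basis elements zero; this is a nilpotent pre-Lie algebra with $A^{[3]}=0$ (the pre-Lie identity is vacuous since every triple product vanishes), it is generated by exactly three elements as a pre-Lie algebra, yet $z'\cdot x=\alpha w$ and $z'\cdot y=\beta w$ force $\alpha=\beta=0$, while then $z'\cdot z'=w\neq 0$. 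So no such $z'$ exists, and the claim that all $\alpha_{ij}$ with $i$ or $j$ equal to $z$ can be made zero is false in general.

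In short: (1)--(3) are fine and match the paper; for (4) the paper proves nothing, and your attempted argument has a genuine gap. The statement as written (and the corresponding entry in the summary theorem) almost certainly intends $i,j\in\{x,y,z\}$, in which case ``at least one $\alpha_{ij}\neq 0$'' is immediate from $A^{[2]}\neq 0$.
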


\begin{proof}
Suppose $A^{[2]}=0$. Then the dimension of $A$ as a vector space over $\F_p$ would be at most $3$, which is not enough. Hence $A^{[2]}\neq 0$.

Notice that $x,y,z\not\in A^{[2]}$ (as well as their combinations) by a similar argument as in Proposition \ref{2el}. Hence $A/A^{[2]}$ is at least $3$-dimensional. Now, $A^{[2]}\neq 0$ implies $A\neq A^{[2]}$, so $A^{[2]}$ is $1$-dimensional. Hence $A^{[3]}=0$.

There exists at least one non-zero element of $A^{[2]}$ which we denote by $w$. Then $w$ is in the basis of $A$ as a vector space. If there exists another element $b\in A^{[2]}$ which is not in ${\mathbb F}_p w$, then $A$ is of dimension greater than $4$ as a vector space.
\end{proof}

\subsubsection{Pre-Lie algebras $A$ with $A^{[3]}\neq 0$ and $A^{[4]}=0$}

\begin{lemma}

Let $A$ be a pre-Lie algebra with $(A,+)\cong C_p^4$ that is generated by $2$ elements $x$ and  $y$ as a pre-Lie algebra. Then $A^{[4]}=0$.

\end{lemma}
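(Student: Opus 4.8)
The plan is to argue by dimension count, exactly in the spirit of the two preceding propositions. Since $(A,+)\cong C_p^4$, the pre-Lie algebra $A$ has dimension $4$ over $\F_p$. I want to show $A^{[4]}=0$, i.e. every product of four elements vanishes. The descending chain $A=A^{[1]}\supseteq A^{[2]}\supseteq A^{[3]}\supseteq A^{[4]}\supseteq\cdots$ consists of subspaces (each $A^{[i]}$ is a subspace because the pre-Lie operation is bilinear and $A^{[i+1]}=\sum_{j=1}^{i}A^{[j]}\ast A^{[i+1-j]}$ is a sum of such images), so it suffices to control how fast it can drop.

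First I would note that $x,y\notin A^{[2]}$, and more generally no nontrivial $\F_p$-combination of $x$ and $y$ lies in $A^{[2]}$: if it did, then modulo $A^{[2]}$ the images of $x$ and $y$ would be linearly dependent, but $A$ is generated by $x,y$ as a pre-Lie algebra, so $A/A^{[2]}$ is spanned by the images of $x$ and $y$ (all higher products lie in $A^{[2]}$), forcing $A=A^{[2]}$ unless those two images are independent — and $A=A^{[2]}$ is impossible for a nilpotent pre-Lie algebra unless $A=0$. Hence $\dim(A/A^{[2]})=2$, so $\dim A^{[2]}\le 2$. This is the same observation used in Proposition \ref{gens}.

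Next I would assume $A^{[4]}\neq 0$ and derive a contradiction by showing the chain $A\supsetneq A^{[2]}\supsetneq A^{[3]}\supsetneq A^{[4]}\supsetneq 0$ would be strictly decreasing and hence force $\dim A\ge 5$. Nilpotency gives $A^{[n]}=0$ for some $n$, and by Lemma 10 of \cite{MR4353236} (invoked in the section preamble) if $A^{[i]}=A^{[i+1]}$ for some $i\ge 2$ then the chain stabilises, so $A^{[i]}=0$; contrapositively, as long as $A^{[i]}\neq 0$ we have strict inclusions $A^{[i]}\supsetneq A^{[i+1]}$ down the chain from level $2$ onward. So $A^{[4]}\neq 0$ would give $\dim A\ge \dim(A/A^{[2]})+1+1+1 = 2+3 = 5$, contradicting $\dim A=4$. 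Therefore $A^{[4]}=0$.

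The main obstacle is making precise the stabilisation step — i.e. justifying that once the chain stops strictly decreasing past level $2$ it is identically zero thereafter — since this is exactly where the cited \cite[Lemma 10]{MR4353236} does the work, and I should make sure its hypotheses (that $A$ is nilpotent, and that we are at level $\ge 2$) are met here; one also needs the elementary fact that $A^{[2]}\neq 0$ whenever $\dim A=4$ and $A$ is generated by $2$ elements, which again follows because otherwise $\dim A\le 2$. Once that lemma is in hand the argument is a one-line dimension count, so I would keep the write-up short.
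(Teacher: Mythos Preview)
Your approach is correct and essentially identical to the paper's: both invoke \cite[Lemma 10]{MR4353236} to guarantee that the chain $A\supsetneq A^{[2]}\supsetneq A^{[3]}\supsetneq A^{[4]}$ is strict whenever $A^{[4]}\neq 0$, then combine this with $\dim_{\F_p}(A/A^{[2]})\ge 2$ to force $\dim_{\F_p} A\ge 5$, a contradiction. The only difference is cosmetic ordering---you establish $\dim(A/A^{[2]})=2$ up front, whereas the paper saves it for the final contradiction.

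One small slip worth tightening: if the images of $x,y$ in $A/A^{[2]}$ are linearly dependent, this gives $\dim(A/A^{[2]})\le 1$, not $A=A^{[2]}$ as you write. The missing step is a Nakayama-type observation: if $\dim(A/A^{[2]})\le 1$ then any preimage of a generator of $A/A^{[2]}$ already generates $A$ as a pre-Lie algebra (since $A$ is nilpotent), contradicting the two-generator hypothesis. The paper makes essentially the same jump (from ``$x,y\notin A^{[2]}$'' directly to ``$A/A^{[2]}$ is at least $2$-dimensional''), so your level of detail is in line with the source.
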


\begin{proof}

Note that if $A^{[i]}=A^{[i+1]}$ for some $i\in \{1,2,3\}$, then $A^{[4]}=0$ by argument as in Lemma 10 of \cite{MR4353236}. 

Suppose $A\neq A^{[2]}\neq A^{[3]}\neq A^{[4]}$. This implies that $A^{[i]}/A^{[i+1]}$ has dimension $1$ or $2$. As $A$ is $4$-dimensional as a vector space over $\F_p$, $A^{[2]}$ can be $3$- or $2$-dimensional, $A^{[3]}$ can be $2$- or $1$-dimensional and $A^{[4]}$ can be $1$-dimensional or $0$. If $A^{[4]}$ is not $0$, then it is $1$-dimensional. Further $A^{[3]}$ is $2$-dimensional and $A^{[2]}$ is $3$-dimensional.
Now notice that by a similar argument as in Proposition \ref{gens} $x,y\not\in A^{[2]}$. Hence $A/A^{[2]}$ is at least $2$-dimensional, which is a contradiction.
Hence, $A^{[4]}=0$. 
\end{proof}

\begin{prop}\label{2el} 
Let $A$ be a pre-Lie algebra with $(A,+)\cong C_p^4$ that is generated by $2$ elements $x$ and $y$ as a pre-Lie algebra. If $A^{[3]}\neq 0$ and $A^{[4]}=0$, then we have the following:
 \begin{enumerate}
    \item Not all of $S:=\{x\c y, y\c x, x^2, y^2\}$ is in $A^{[3]}$.\label{firstp}
     \item $A$ is an $\F_p$-vector space generated by $\{x,y,z,w\}$ with $z\in S-A^{[3]}$ and $w$ has one factor in $\{x,y\}$ and the other in $S-A^{[3]}$. \label{basis}
     \item $A^{[3]}={\mathbb F}_p w$, $A^{[2]}/A^{[3]}={\mathbb F}_p z.$
     \item $A^{[2]}$ is $2$-dimensional as a vector space.
     \item For $i,j\in\{x,y\}$ we can write
     \[i\c j = \alpha_{ij}z+\beta_{ij}w\]
     for some $\alpha_{ij}, \beta_{ij}\in \F_p$. Then not all $\alpha_{ij}$ are zero. Further, for $k,l$ such that either $z=k$ and $l\in\{x,y\}$  or $z=l$ and $k\in \{x,y\}$ we let
     \[k\c l = \gamma_{kl}w.\]
     Then it follows that
     \[\alpha_{xy}\gamma_{zx}-\alpha_{yx}\gamma_{xz}-\alpha_{yx}\gamma_{zx}+\alpha_{xx}\gamma_{yz}=0,\]
     \[\alpha_{yx}\gamma_{zy}-\alpha_{xy}\gamma_{yz}-\alpha_{xy}\gamma_{zy}+\alpha_{yy}\gamma_{xz}=0.\]\label{relations}
     \item If $A$ satisfies properties \ref{firstp} to \ref{relations}, then it is a well defined pre-Lie algebra.
 \end{enumerate}

\end{prop}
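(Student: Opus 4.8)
The plan is to first use dimension counting to pin down the radical chain $A\supsetneq A^{[2]}\supsetneq A^{[3]}\supsetneq A^{[4]}=0$ together with a convenient basis, and then to extract every structural constraint by evaluating the left-symmetry identity on triples of generators.

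\emph{Parts \ref{firstp}--4.} By the argument used in Proposition \ref{gens} (a minimal generating set of a nilpotent pre-Lie algebra maps to a basis of $A/A^{[2]}$), $x$ and $y$ are linearly independent modulo $A^{[2]}$, so $\dim_{\F_p}A/A^{[2]}=2$ and $\dim_{\F_p}A^{[2]}=2$; this is part~4. The chain $A^{[i]}$ is strictly decreasing until it reaches $0$, because $A^{[i]}=A^{[i+1]}$ forces $A^{[i]}=0$ by the argument of Lemma~10 of \cite{MR4353236}; as $A^{[3]}\neq 0$ this gives $\dim_{\F_p}A^{[3]}=1$, consistent with $A^{[4]}=0$. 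Writing $A=\F_p x+\F_p y+A^{[2]}$ we get that $A^{[2]}=A\c A$ is spanned over $\F_p$ by $S$ together with $A^{[3]}$, since every product with a factor in $A^{[2]}$ lies in $A^{[3]}$ or in $A^{[2]}\c A^{[2]}\subseteq A^{[4]}=0$; hence $S\subseteq A^{[3]}$ would give $A^{[2]}=A^{[3]}$, a contradiction, proving part~\ref{firstp}. Choosing $z\in S\setminus A^{[3]}$, the space $\F_p z+A^{[3]}$ is $2$-dimensional, hence equals $A^{[2]}$, so $A^{[2]}/A^{[3]}=\F_p\bar z$. Likewise $A^{[3]}=A\c A^{[2]}+A^{[2]}\c A=\F_p(x\c z)+\F_p(z\c x)+\F_p(y\c z)+\F_p(z\c y)$ (using $A\c A^{[3]}=A^{[3]}\c A=0$ and $A^{[2]}\c A^{[2]}=0$), so one of these four products spans $A^{[3]}$ and may be taken to be $w$; this gives part~\ref{basis} and part~3.

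\emph{Part \ref{relations}.} Reducing modulo $A^{[3]}=\F_p w$ gives $\overline{i\c j}=\alpha_{ij}\bar z$ for $i,j\in\{x,y\}$, and since these span $A^{[2]}/A^{[3]}=\F_p\bar z$, not all $\alpha_{ij}$ vanish. For the two displayed relations, evaluate the left-symmetry identity $(a\c b)\c c-a\c(b\c c)=(b\c a)\c c-b\c(a\c c)$ on $(a,b,c)=(x,y,x)$ and on $(a,b,c)=(x,y,y)$. Substituting $i\c j=\alpha_{ij}z+\beta_{ij}w$, $z\c k=\gamma_{zk}w$, $k\c z=\gamma_{kz}w$, and using that every term containing a factor $w$ and every term of the form $z\c z$ lies in $A^{[4]}=0$, the $\beta$-contributions cancel and the two identities collapse to exactly the two stated equations (the triples $(y,x,x)$ and $(y,x,y)$ yield the same two equations up to an overall sign). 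One then checks that every other basis triple gives $0=0$: if $a=b$ the identity is trivial, and if one of $a,b,c$ lies in $\{z,w\}$ then every nested product of three of the basis vectors already lies in $A^{[4]}=0$. Hence these are the only constraints imposed by left-symmetry.

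\emph{The final part.} Conversely, take a $4$-dimensional $\F_p$-vector space with basis $\{x,y,z,w\}$ and the bilinear operation determined by the structure constants $\alpha_{ij},\beta_{ij}$ on $\{x,y\}$ and $\gamma_{kl}$ on the pairs having one entry $z$ and one in $\{x,y\}$, all remaining basis products being $0$. By bilinearity it suffices to verify left-symmetry on the $64$ basis triples: those with $a=b$ are trivial; those with some entry in $\{z,w\}$ have both sides $0$ by the same filtration computation as above; and those with $a,b,c\in\{x,y\}$ and $a\neq b$ reduce, using the symmetry of the identity in $a$ and $b$, to the cases $(x,y,x)$ and $(x,y,y)$, which hold precisely because the relations of part~\ref{relations} are assumed. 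Thus the operation is pre-Lie, and one checks directly that the resulting algebra has the radical chain described in parts~\ref{firstp}--4. The work here is essentially bookkeeping rather than conceptual; the one point requiring care is verifying that the case analyses behind part~\ref{relations} and the final part are genuinely exhaustive — in particular that no extra relation is hidden in a triple involving $z$ or $w$ — and that in the converse direction the element $z$, which in the forward direction is a concrete product of $x$ and $y$, can be treated as an independent basis vector without introducing inconsistencies among the defining relations.
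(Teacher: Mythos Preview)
Your argument is correct and follows essentially the same route as the paper's own proof: both use the fact that $x,y\notin A^{[2]}$ to force $\dim A/A^{[2]}=2$, hence $\dim A^{[2]}=2$ and $\dim A^{[3]}=1$, then pick $z\in S\setminus A^{[3]}$ and a three-fold product $w$ spanning $A^{[3]}$, and finally read off the two relations from the left-symmetry identity on the triples $(x,y,x)$ and $(x,y,y)$ (equivalently $(y,x,y)$). Your write-up is in fact more careful than the paper's in two places: you explicitly justify that $A^{[3]}$ is spanned by the four products $x\c z,\,z\c x,\,y\c z,\,z\c y$ (so $w$ may be chosen with a factor equal to the \emph{same} $z$), and you spell out the exhaustiveness check over all basis triples for the converse direction, which the paper leaves implicit.
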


\begin{proof}
Let $S:=\{x\c y, y\c x, x^2, y^2\}$. Notice that by a similar argument as in Proposition \ref{gens} $x,y\not\in A^{[2]}$. Furthermore, not all of $S$ can be products of more than two elements as this would lead to $A^{[2]}=0$. Hence at least one of $x\c y, y\c x, x^2, y^2$ is (non-zero and) not a product of $3$ elements. Let this element be $z$. As $x,y\not\in A^{[2]}$ we have that $A/A^{[2]}$ is $2$ dimensional, $A^{[2]}/A^{[3]}$ is $1$ dimensional and $A^{[3]}$ is $1$ dimensional. Therefore, elements of $A^{[2]}$ other than $z$ are a product of $3$ elements or in ${\mathbb F}_pz$. 

Since $A^{[3]}$ is $1$ dimensional there has to exist a non-zero product of $3$ elements (and of no more than $3$ elements). Let this be $w$. Notice that $w$ is a product $c\cdot d$ with $d\in \{x,y\}$ and $c\in S$ (or vice versa). Then $c\in S-A^{[3]}$ as otherwise $z=0$. Any non-zero element of $A^{[3]}$ is in ${\mathbb F}_p w$ as $A^{[3]}$ is $1$ dimensional. 

Since $A^{[2]}$ is $2$-dimensional, we can write any element of $A^{[2]}$ as a linear combination of $z$ and $w$, as per point \ref{relations}. Notice that the only relevant pre-Lie algebra relations are
\begin{align}
        (x\c y)\c x-x\c (y\c x)&=(y\c x)\c x-y\cdot x^2,\\
        (y\c x)\c y-y\c (x\c y)&=(x\c y)\c y-x\cdot y^2
    \end{align}
as products of more than $3$ elements are zero. Using the notation specified in the proposition statement and the pre-Lie algebra relations, we arrive at the relations in \ref{relations}. Notice that this implies that the structure is indeed a well-defined pre-Lie algebra.

\end{proof}

\subsection{Nilpotent pre-Lie rings with additive group $ C_p \times C_{p^3} $}\label{cpcp3}

Braces with additive group $C_p \times C_{p^3}$ are right nilpotent, hence we can calculate them from nilpotent pre-Lie rings with the same additive group. In this section we characterise nilpotent pre-Lie Rings $(A,+,\c)$ with $(A,+)\cong  C_p \times C_{p^3}$. Note that by analogous arguments as in \cite[Lemma 10]{MR4353236} we have $A^{[6]}=0$.

\subsubsection{Pre-Lie rings $A$ with $A^{[2]}\neq 0$ and $A^{[3]}=0$}

\begin{prop}\label{a3=0cp3}
    Suppose $A$ is a pre-Lie ring with $(A,+)\cong C_p\times C_{p^3}$ generated by $x$ and $y$ as an additive group, with $py=p^3x=0$. Further, suppose $A$ is generated by two elements as a pre-Lie ring. If $A^{[2]}\neq 0$ and $A^{[3]}=0$, then we have the following:
    \begin{enumerate}
        \item $A^{[2]}$ is generated by $p^2 x$.
        \item We have
        \[x\cdot x = ax, \;\; x\cdot y = cx,\;\; y\cdot x = ex, \;\; y\cdot y = gx,\]
        for $a,c,e,g\in \{1,2, \dots, p^3\}$ such that $p^2\mid a,c,e,g.$
    \end{enumerate}
    
\end{prop}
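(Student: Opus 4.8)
The plan is to analyze the structure of $A$ as a $\Z$-module together with the constraint that $A^{[2]} \neq 0$ and $A^{[3]} = 0$, using the fact that $A$ is generated by $x$ and $y$ as a pre-Lie ring. First I would observe that since $A$ is generated by $x, y$ as an additive group (with $py = p^3 x = 0$), every element of $A^{[2]}$ is a $\Z$-linear combination of the four products $x\cdot x$, $x\cdot y$, $y\cdot x$, $y\cdot y$; since $A^{[3]} = 0$, these products are central in the sense that multiplying any of them by $x$ or $y$ again gives zero, so $A^{[2]}$ is an ideal killed by the multiplication. The key point is to pin down where $A^{[2]}$ sits inside $(A,+) \cong C_p \times C_{p^3}$. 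Since $A$ is generated by $x, y$ as a pre-Lie ring and $A^{[2]} \neq 0$, the quotient $A/A^{[2]}$ must still be generated by (the images of) $x$ and $y$ as an additive group, but now $A/A^{[2]}$ has order strictly less than $p^4$, so $A^{[2]}$ is nontrivial of order $p$, $p^2$, or $p^3$; I would argue that it must be exactly the subgroup of order $p$ generated by $p^2 x$.

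The main step is to show $A^{[2]} = \langle p^2 x\rangle$. Here I would use that $x$ and $y$ generate $(A,+)$ with $x$ of order $p^3$ and $y$ of order $p$. The subgroups of $C_p \times C_{p^3}$ are limited: any subgroup is either contained in the socle $\{a \in A : pa = 0\} \cong C_p \times C_p$, or contains an element of order $p^2$ or $p^3$. If $A^{[2]}$ contained an element of order $p^2$ or more, then $A/A^{[2]}$ would have exponent dividing $p$ but could still have order as large as $p^2$ — I'd need to rule out that $x, y$ together with the relation from $A^{[2]}$ fail to generate the full group; more carefully, $A/A^{[2]}$ is generated by two elements $\bar x, \bar y$ and must have exponent dividing... actually here is the crux: the image $\bar x$ of $x$ in $A/A^{[2]}$ still has order at least $p^3/|A^{[2]}|$ (if $A^{[2]} \subseteq \langle x\rangle$), and the structure forces $A^{[2]}$ to be the unique minimal subgroup lying in $\langle x\rangle$, namely $\langle p^2 x\rangle$, once one checks $A^{[2]} \subseteq \langle x\rangle$ — which follows because $A^{[2]}$ is generated by products of $x$ and $y$ and, modulo the torsion constraints, one shows the only consistent possibility giving $A^{[3]} = 0$ while keeping $A$ of order $p^4$ is that all products land in $\langle p^2 x\rangle$. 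Then part (2) follows: writing $x\cdot x = ax$ etc. (using that $\langle x\rangle$ is the cyclic part of order $p^3$ and all products lie in $\langle p^2 x\rangle$), we get $p^2 \mid a, c, e, g$, and conversely any such choice automatically satisfies the pre-Lie identity since all triple products vanish.

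I would carry this out in the order: (i) set up the module structure and note $A^{[2]}$ is spanned by the four basic products and is annihilated by left and right multiplication; (ii) show $A^{[2]} \subseteq \langle x\rangle$ by a torsion/generation count — this uses that $y$ has order $p$ so products involving $y$ contribute elements of controlled order, and that $A/A^{[2]}$ must be generated by $\bar x, \bar y$ with the right order; (iii) deduce $A^{[2]} = \langle p^2 x\rangle$ from $A^{[2]} \neq 0$, $A^{[3]} = 0$, and $|A| = p^4$; (iv) rewrite the four products as multiples of $x$ with the divisibility $p^2 \mid a,c,e,g$; (v) note the pre-Lie axiom is automatically satisfied. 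The main obstacle I anticipate is step (ii): ruling out that some product, say $x\cdot x$, equals an element of $\langle x\rangle$ of order $p^2$ or $p^3$ (which would make $A^{[2]}$ larger than $\langle p^2 x\rangle$) or equals an element outside $\langle x\rangle$ involving the $C_p$-part. One has to use carefully that $A$ is generated as a \emph{pre-Lie ring} by $x$ and $y$ — so $A^{[2]}$ cannot be "too big" relative to what $x, y$ generate additively modulo it — combined with $A^{[3]} = 0$ forcing no further products. This bookkeeping of orders in $C_p \times C_{p^3}$ is the delicate part; everything else is routine.
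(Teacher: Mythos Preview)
Your overall plan---locate $A^{[2]}$ among the subgroups of $C_{p^3}\times C_p$ using generation and $A^{[3]}=0$---is reasonable, but there is a genuine gap at exactly the step you flag as ``delicate'', and one of your intermediate claims is wrong.

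The claim ``if $A^{[2]}$ contained an element of order $p^{2}$ or more, then $A/A^{[2]}$ would have exponent dividing $p$'' is false: take $A^{[2]}=\langle px+y\rangle$, cyclic of order $p^{2}$; then $px\notin A^{[2]}$ and $\bar x$ has order $p^{2}$ in the quotient. So this does not cut down the candidate subgroups.

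More seriously, step (ii)---getting $A^{[2]}\subseteq\langle x\rangle$, in fact $A^{[2]}=\langle p^{2}x\rangle$---is only asserted (``one shows the only consistent possibility\dots''). The concrete mechanism you are missing, and which the paper uses directly, is the torsion constraint coming from $py=0$. Bilinearity gives
\[
p(x\cdot y)=x\cdot(py)=0,\qquad p(y\cdot x)=(py)\cdot x=0,\qquad p(y\cdot y)=0,
\]
so writing $x\cdot y=cx+dy$ forces $pcx=0$, i.e.\ $p^{2}\mid c$; likewise $p^{2}\mid e,g$. The coefficient $a$ in $x\cdot x=ax+by$ is then handled via $A^{[3]}=0$: from $(x\cdot x)\cdot x=0$ together with $p^{2}\mid e$ one reads off $p\mid a$, and a second pass gives $p^{2}\mid a$. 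At that point every product lies in $\langle p^{2}x,\,y\rangle$; if any $y$-coefficient were nonzero one would obtain $y\in A^{[2]}$, contradicting the fact (which you do note) that the pre-Lie generators $x,y$ lie outside $A^{[2]}$. Hence $A^{[2]}\subseteq\langle p^{2}x\rangle$, with equality since $A^{[2]}\neq 0$.

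Your subgroup-lattice route can be made to work, but the lever is not what you wrote: the two-generator hypothesis forces $A/A^{[2]}$ to be \emph{non-cyclic} (a Burnside-basis style argument in the nilpotent setting), and among nonzero subgroups of $C_{p^{3}}\times C_{p}$ avoiding both $x$ and $y$, only $\langle p^{2}x\rangle$ has non-cyclic quotient and is compatible with $A^{[3]}=0$. Without isolating that non-cyclicity, the enumeration does not close.
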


\begin{proof}
    Note that we can assume that $A$ is generated by $x$ and $y$ as a pre-Lie ring. Following a similar argument as in Proposition \ref{gens}, $x,y\not\in A^{[2]}$. We let for $a,c,e,g\in \{1,2, \dots, p^3\}$ and $b,d,f,h\in\{1,2,\dots,p\}$
\[x\cdot x = ax+by, \;\; x\cdot y = cx+dy,\;\; y\cdot x = ex+fy, \;\; y\cdot y = gx+hy.\]
Then $p(x\c y) = p(y\c x) = p(y\c y)=0$ implies that 
$p^2\mid c,e,g$. We have $p (x\cdot x) = pax$, hence 
\[px\c(x\c(x\c(x\c (x\c x))))= pa^5 x \in A^{[6]}.\]
Hence $p\mid a$. It follows that  $x\c(x\c x) = a^2x=0$, hence $p^2\mid a$. This implies that $A^{[2]}$ is generated by $p^2x$.
\end{proof}

\begin{prop}
    Suppose $A$ is a pre-Lie ring with $(A,+)\cong C_p\times C_{p^3}$ generated by $x$ and $y$ as an additive group, with $py=p^3x=0$. Further, suppose $A$ is generated by one element as a pre-Lie ring. If $A^{[2]}\neq 0$ and $A^{[3]}=0$, then the following holds:
    \begin{enumerate}
        \item We have $y = \alpha x + \beta x^2$, for $\alpha, \beta\in \{1,2, \dots, p^3\} $ with $p\mid \alpha$.
        \item $A^{[2]} = \Z (p^2x)$.

    \end{enumerate}
    
\end{prop}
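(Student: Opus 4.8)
\emph{The plan.} The idea is to push everything through a single pre-Lie generator $x$ of $A$, first fixing the additive picture. Since $A^{[3]}=0$, every product of three or more elements vanishes, so the sub-pre-Lie ring generated by $x$ is just $\Z x+\Z x^{2}$ with $x^{2}:=x\cdot x$; hence $A=\Z x+\Z x^{2}$, and $A^{[2]}=\Z x^{2}$, since any product of two elements of $A$ is a $\Z$-multiple of $x^{2}$ (the cross terms with an $x^{2}$ factor lie in $A^{[3]}=0$). By $\Z$-bilinearity, $p^{k}x=0$ forces $p^{k}x^{2}=(p^{k}x)\cdot x=0$, so $\mathrm{ord}(x^{2})\mid\mathrm{ord}(x)$; consequently, if $\mathrm{ord}(x)\le p^{2}$ then $A=\Z x+\Z x^{2}$ would be a group of order at most $p^{4}$ that is a quotient of $C_{p^{2}}\times C_{p^{2}}$, forcing $(A,+)\cong C_{p^{2}}\times C_{p^{2}}$, contrary to hypothesis. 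Hence $\mathrm{ord}(x)=p^{3}$, $\langle x\rangle_{+}\cong C_{p^{3}}$ has index $p$ in $A$, and $A/\langle x\rangle_{+}\cong C_{p}$ is generated by the image of $x^{2}$; in particular $x^{2}\notin\langle x\rangle_{+}$ while $px^{2}\in\langle x\rangle_{+}$. This $x$ is the distinguished generator in the statement.

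For item~(1): as $y\in A=\Z x+\Z x^{2}$, write $y=\alpha x+\beta x^{2}$ with $\alpha,\beta\in\{1,\dots,p^{3}\}$, and note $p\nmid\beta$ (else $\beta x^{2}=(\beta/p)(px^{2})\in\langle x\rangle_{+}$, whence $y\in\langle x\rangle_{+}$, contradicting that $\{x,y\}$ generates $A\supsetneq\langle x\rangle_{+}$). Expand $x^{2}$ in a complement $A=\langle x\rangle_{+}\oplus C$ as $x^{2}=ex+fc$, with $c$ a generator of $C\cong C_{p}$ and $p\nmid f$, so $px^{2}=pex$. Since $x^{2}\in A^{[2]}$ we have $x^{2}\cdot x^{2}=0$ (it lies in $A^{[3]}=0$), and because $pc=0$ the elements $x\cdot c,\ c\cdot x,\ c\cdot c$ have order dividing $p$; multiplying the expansion of $x^{2}\cdot x^{2}=(ex+fc)\cdot(ex+fc)$ by $p$ kills every term except $pe^{2}x^{2}$, so $pe^{2}x^{2}=0$. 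If $p\nmid e$ this forces $\mathrm{ord}(x^{2})\mid p$, which is impossible since $p\nmid e$ gives $\mathrm{ord}(ex)=p^{3}\le\mathrm{ord}(x^{2})$; hence $p\mid e$. Then $py=0$ reads $(p\alpha+\beta pe)x=0$, i.e. $p^{2}\mid\alpha+\beta e$, and since $p\mid\beta e$ we conclude $p\mid\alpha$.

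\emph{The main obstacle.} It remains to establish item~(2), $A^{[2]}=\Z(p^{2}x)$, and this is the step I expect to carry the real weight. From the above, $A^{[2]}=\Z x^{2}$ with $\mathrm{ord}(x^{2})\le p^{2}$ (using $p\mid e$), and one must first upgrade this to $\mathrm{ord}(x^{2})=p$ by exploiting the remaining consequences of $A^{[3]}=0$ — namely $x\cdot x^{2}=x^{2}\cdot x=0$ and $c\cdot x^{2}=x^{2}\cdot c=0$ — together with the precise structure $(A,+)\cong C_{p}\times C_{p^{3}}$. One then has to reconcile the intrinsic cyclic group $\Z x^{2}$ with $\Z(p^{2}x)$, which amounts to showing that, after a suitable choice of the distinguished order-$p^{3}$ generator $x$, one has $x^{2}\in p^{2}\langle x\rangle_{+}$. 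This last identification — fitting $A^{[2]}$ inside $\langle x\rangle_{+}$ while keeping $x$ of order $p^{3}$ — is where essentially all the work lies; the remainder is routine bookkeeping with bilinearity over $\Z/p^{3}\Z$ and the pre-Lie identity (which here imposes no further constraint, since all triple products are already zero).
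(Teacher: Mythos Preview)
Your argument for part~(1) is correct, though longer than the paper's. The paper simply notes that $py=0$ gives $p\alpha x=-p\beta x^{2}$; if $p\nmid\alpha$ then $px=-\alpha^{-1}\beta\,px^{2}$, and since $px^{2}=x\cdot(px)=-\alpha^{-1}\beta\,x\cdot(px^{2})$ lands in $A^{[3]}=0$, one obtains $px=0$, contradicting $|x|=p^{3}$. Your detour through an auxiliary complement generator and the identity $x^{2}\cdot x^{2}=0$ reaches the same conclusion but is not needed.

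For part~(2) your caution is well founded --- indeed the difficulty is worse than you suspect: the claim $A^{[2]}=\Z(p^{2}x)$ is \emph{false} as stated, so the ``upgrade $\mathrm{ord}(x^{2})\le p^{2}$ to $\mathrm{ord}(x^{2})=p$'' that you flag as the main obstacle cannot be carried out in general. Take $(A,+)=\langle x\rangle\oplus\langle y\rangle\cong C_{p^{3}}\times C_{p}$ with
\[
x\cdot x=px+y,\qquad x\cdot y=y\cdot x=-p^{2}x,\qquad y\cdot y=0,
\]
extended $\Z$-bilinearly. This is well defined (all products with $p^{3}x$ or $py$ vanish), every triple product of generators is zero (so $A^{[3]}=0$ and the pre-Lie identity holds trivially), and $A$ is generated by $x$ as a pre-Lie ring since $y=x^{2}-px$. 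Part~(1) holds here with $\alpha=-p$, $\beta=1$. But $A^{[2]}=\Z x^{2}=\Z(px+y)$ is cyclic of order $p^{2}$, not $\Z(p^{2}x)$; and no change of the order-$p^{3}$ additive generator can fix this, since $p^{2}x'$ always has order~$p$.

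The paper's own proof of~(2) has precisely this gap. It assumes for contradiction that $A^{[2]}=\langle px\rangle$, so that $px\in A^{[2]}$ and hence $px^{2}=(px)\cdot x\in A^{[3]}=0$, forcing $|A^{[2]}|=|\Z x^{2}|\le p$. But this rules out only one cyclic subgroup of order $p^{2}$; when $A^{[2]}=\langle px+y\rangle$ as above, one has $px\notin A^{[2]}$ and the key step $px^{2}\in A^{[3]}$ is unavailable. So neither your outline nor the paper's argument can be completed to a proof of~(2) without an additional hypothesis (for instance $x^{2}\in\langle x\rangle_{+}$) or a change in the statement.
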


\begin{proof}
    Notice that we can assume that $x$ is the generator of $A$ as a pre-Lie ring. Hence we can write $y=\alpha x+\beta x^2$ for some $\alpha, \beta \in \{1,2, \dots, p^3\}$. If $p\nmid \alpha$, then we can write
    \[px = -\alpha^{-1}\beta x\c px = \alpha^{-2}\beta^2 x\c (x\c x)=0. \]
    Therefore $p\mid \alpha$.
    
    Now suppose $A^{[2]}$ is generated by $px$ as an additive group. Then $|A^{[2]}| = p^2$. Notice that $px^2 = 0$ as it is in $A^{[3]}$. As $A^{[2]} = \Z(x^2)$, this implies $|A^{[2]}| = p$, which is a contradiction. Therefore, $A^{[2]}$
    is generated by $p^2x$ as an additive group.

\end{proof}

\subsubsection{Pre-Lie rings $A$ with $A^{[3]}\neq 0$}

 In Lemmas \ref{first} to \ref{b=0} we assume that $A$ is a nilpotent pre-Lie ring with $A^{[3]}\neq 0$, with additive group $(A,+)\cong C_p\times C_{p^3}$ generated by $x$ and $y$, where $p^3x=py=0.$ Further, we let for $a,c,e,g\in \{1,2, \dots, p^3\}$ and $b,d,f,h\in\{1,2,\dots,p\}$
\[x\cdot x = ax+by, \;\; x\cdot y = cx+dy,\;\; y\cdot x = ex+fy, \;\; y\cdot y = gx+hy.\]

\begin{lemma}\label{first}
$p^2\mid a,c,e,g$.
\end{lemma}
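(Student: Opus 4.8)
The plan is to exploit the same structural constraints that powered Proposition \ref{a3=0cp3}: the additive group has a $p$-torsion generator $y$ and a $p^3$-torsion generator $x$, and $A^{[6]}=0$, so any sufficiently long product must vanish. First I would record the immediate divisibility facts coming from $p$-torsion of $y$: since $py=0$ and $p(x\cdot y)=(px)\cdot y\cdot\ldots$ — more precisely, $p(x\c y)=x\c(py)=0$ if we distribute on the right, but we must be careful since right multiplication by $y$ is additive, so $p(x\c y)=(px)\c y$ and also $x\c(py)=0$; whichever way, the coefficient of $x$ in $x\c y$, $y\c x$, $y\c y$ is killed by $p^3/p^2$-type reasoning. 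Concretely $x\c y=cx+dy$ with $p(x\c y)=0$ forcing $p^3\mid pc$, i.e. $p^2\mid c$, and likewise $p^2\mid e$ and $p^2\mid g$. So the only genuine work is to show $p^2\mid a$, where $x\c x=ax+by$.

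For the coefficient $a$: I would argue exactly as in Proposition \ref{a3=0cp3}, but now only using $A^{[6]}=0$ rather than $A^{[3]}=0$ (since here $A^{[3]}\neq 0$). Compute the iterated left product $x\c(x\c(x\c(x\c(x\c x))))$; modulo lower-order considerations its $x$-coefficient is a power of $a$ (the $y$-part gets absorbed because $b$ is only defined mod $p$ and further multiplications by $x$ reduce things), and this element lies in $A^{[6]}=0$. Tracking the $x$-coefficient gives $p^3\mid a^5$ (or a similar power), hence $p\mid a$. Once $p\mid a$, one has $x\c x=ax+by$ with $p\mid a$, so $x\c(x\c x)$ has $x$-coefficient divisible by $a^2$, hence by $p^2$; since $x\c(x\c x)\in A^{[3]}$ which need not be zero here, I cannot conclude $x\c(x\c x)=0$ directly, so instead I would push one more step: $x\c(x\c(x\c x))\in A^{[4]}$, and iterating into $A^{[6]}$ again forces the $x$-coefficient, which is $\sim a^3$ or higher, to be divisible by $p^3$, giving $p^2\mid a$ after combining with $p\mid a$. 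The bookkeeping of exactly which power of $a$ appears and at which radical level it lands in $A^{[6]}$ is the delicate point.

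The main obstacle is precisely this last bookkeeping: unlike the $A^{[3]}=0$ case, here the chain $A\supseteq A^{[2]}\supseteq A^{[3]}\supseteq\ldots$ is longer, and the $y$-components (coefficients $b,d,f,h$ defined only mod $p$) do not automatically drop out, so I must check that when I iterate left multiplication by $x$ the contributions involving $b$ (and cross terms) either vanish by $p$-torsion or get pushed into $A^{[6]}=0$ along with the $a$-power term. I would handle this by first establishing $p\mid a$ (which makes $a x$ a $p$-torsion-ish term) and then re-running the iteration with that extra divisibility in hand, so that all error terms become manifestly zero and the surviving $x$-coefficient is a pure power of $a$ times a unit. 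A clean way to organise this is to observe that once $p\mid a,c,e,g$, the subring structure on $A^{[2]}$ is controlled by $p^2x$-type elements, and a short induction on the radical chain shows every coefficient of $x$ appearing at level $k\geq 3$ is divisible by a growing power of $p$; matching this against $A^{[6]}=0$ closes the argument.
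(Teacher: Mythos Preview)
Your treatment of $c,e,g$ is exactly the paper's: from $py=0$ one gets $p(x\c y)=p(y\c x)=p(y\c y)=0$, forcing $p^2\mid c,e,g$. Your argument for $p\mid a$ is also the paper's: the identity $p\,L_x^k(x)=pa^kx$ (proved by the one-line induction $x\c(pa^{k-1}x)=pa^{k-1}(ax+by)=pa^kx$) together with $L_x^5(x)\in A^{[6]}=0$ gives $pa^5x=0$, hence $p^2\mid a^5$, hence $p\mid a$.

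Where you get stuck --- upgrading $p\mid a$ to $p^2\mid a$ --- is for a very good reason: that upgrade is \emph{false} in the present setting. Lemma~\ref{b=0} a few lines later proves $p^2\nmid a$ under the standing hypothesis $A^{[3]}\neq 0$, and the summarising Proposition at the end of Section~\ref{cpcp3} explicitly records the conditions as $p^2\mid c,e,g$, $p\mid a$, $p^2\nmid a$. So the appearance of $a$ in the list ``$p^2\mid a,c,e,g$'' is a typo; the intended statement (and the only thing the referenced argument from Proposition~\ref{a3=0cp3} actually yields without invoking $A^{[3]}=0$) is $p^2\mid c,e,g$ and $p\mid a$. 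The step ``$x\c(x\c x)=a^2x=0$'' in Proposition~\ref{a3=0cp3} uses $A^{[3]}=0$ and is precisely the step that does \emph{not} carry over here.

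Your instinct that the bookkeeping for $p^2\mid a$ is ``the delicate point'' was therefore correct, but no amount of bookkeeping will close it: the obstacle is not technical but that the target is wrong.
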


\begin{proof}
Follows from the same argument as in Proposition \ref{a3=0cp3}.
\end{proof}

\begin{lemma}
$p\mid h$.
\end{lemma}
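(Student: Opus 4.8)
The plan is to mimic the power-nilpotency argument already used for Lemma~\ref{first}, but applied to the ``$y$-direction'' of the structure constants. Recall we have written
\[x\cdot x = ax+by, \;\; x\cdot y = cx+dy,\;\; y\cdot x = ex+fy, \;\; y\cdot y = gx+hy,\]
and Lemma~\ref{first} already gives $p^2\mid a,c,e,g$. Consequently, modulo $py=p^3x=0$, every product of two elements reduces to $bx y$-type terms plus the $d,f,h$ contributions in the $y$-component; in particular $x\cdot y \equiv dy$, $y\cdot x\equiv fy$ and $y\cdot y\equiv hy \pmod{p x}$ after we also note $p^2x$ generates $A^{[2]}$-ish pieces. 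The key observation is that $y\cdot y = gx+hy$ with $p^2\mid g$, so $p(y\cdot y)=phy$ (since $pgx$ has order dividing $p$ and $p^3\mid p^2\cdot p$, actually $pgx=0$ because $p^2\mid g$ forces $p^3\mid pg$); hence $p(y\cdot y)=phy$, and more usefully $y\cdot(y\cdot y) = y\cdot(gx+hy) = g(y\cdot x)+h(y\cdot y)$.

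First I would compute the iterated product $y\cdot(y\cdot(\cdots(y\cdot y)\cdots))$ with, say, $6$ copies of $y$, which must lie in $A^{[6]}=0$. Expanding, and using $p^2\mid a,c,e,g$ to kill all terms that acquire a factor from $\{a,c,e,g\}$ (each such term lands in $p^2 A\cap(\text{stuff})$ and combined with the coefficients collapses to $0$, because the $x$-component then has a factor $p^2$ times something, and crucially any surviving $y$-component picks up a factor $p$ from $py=0$ whenever two $y$'s are ``multiplied''), the dominant surviving term is $h^{5}(y\cdot y)$'s $y$-part, i.e.\ $h^{5}\cdot h\, y = h^{6} y$ up to lower-order corrections — more precisely the long product equals $h^{k} y + (\text{terms divisible by } p)$ in the $y$-component for the appropriate exponent $k$. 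Since this element is $0$ and $y$ has additive order $p$, we get $p\mid h^{k}$, hence $p\mid h$.

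The main obstacle I anticipate is bookkeeping the $x$-component contributions: a term like $g(y\cdot x)$ has $g$ divisible by $p^2$, and $y\cdot x = ex+fy$ with $p^2\mid e$, so $g(y\cdot x) = ge\,x + gf\,y$; here $p^4\mid ge$ so $ge\,x=0$ in $C_{p^3}$, and $p^2\mid gf$ so $gf\,y = 0$ since $py=0$. So such mixed terms vanish cleanly — the argument is robust precisely because $p^2\mid a,c,e,g$ is a strong divisibility. The only term that can survive all the way through an iterated product of $y$'s without being annihilated is the pure $h$-power in the $y$-slot, and even there each ``multiplication'' of the accumulated $y$ by a fresh $y$ via $y\cdot y$ contributes its $hy$ part while the $gx$ part dies. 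I would carry out the expansion carefully enough to isolate the coefficient of $y$ as a polynomial in $h$ with leading term $h^{k}$ (and all other monomials divisible by $p$), conclude $h^{k}\equiv 0 \pmod p$, and hence $p\mid h$. A cleaner packaging: work in the quotient pre-Lie ring $A/(pA)$, which has additive group $C_p\times C_p$ with induced products $\bar x\bar x=\bar b\bar y$, $\bar x\bar y=\bar d\bar y$, $\bar y\bar x=\bar f\bar y$, $\bar y\bar y=\bar h\bar y$ (all $x$-components vanish mod $p$ since $p^2\mid a,c,e,g$, and $\bar b$ survives); then $A^{[6]}=0$ descends, and the span of $\bar y$ under right multiplication by $\bar y$ is stable with $\bar y\cdot\bar y=\bar h\bar y$, forcing $\bar h=0$ in $\F_p$, i.e.\ $p\mid h$.
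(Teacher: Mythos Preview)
Your approach is correct and essentially identical to the paper's: iterate left multiplication by $y$, using that $y\cdot(y\cdot y)=h(y\cdot y)$ holds \emph{exactly} (since $p^2\mid g,e$ kills the cross-term $g(y\cdot x)=gex+gfy$), so the six-fold product is $h^4(y\cdot y)=h^4gx+h^5y\in A^{[6]}=0$, giving $p\mid h^5$ and hence $p\mid h$. Your exponent is off by one---six factors yield $h^4(y\cdot y)$, not $h^5(y\cdot y)$---but this is harmless, and your quotient reformulation in $A/pA$ is just a clean repackaging of the same computation.
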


\begin{proof}
We have $y\c (y\cdot y) =h y\c y$, hence 
\[y\c(y\c(y\c(y\c (y\c y))))=h^4 y\c y= h^4 gx + h^5 y\in A^{[6]}.\]
Hence $p\mid h$.
\end{proof}

\begin{lemma}
$p\mid d$.
\end{lemma}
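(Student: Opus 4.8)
The plan is to argue by contradiction: assume $p\nmid d$ and derive a contradiction with $A^{[6]}=0$ (which holds by the remark at the start of Section \ref{cpcp3}). First I would record the data already available: by Lemma \ref{first}, $p^2\mid a$ and $p^2\mid c$; from the previous lemma $p\mid h$; and $py=p^3x=0$. Since $|A|=p^4$, $p^3x=0$ and $py=0$ force $x$ to have order exactly $p^3$ and $y$ order exactly $p$, so $(A,+)$ is the internal direct sum $\langle x\rangle\oplus\langle y\rangle$. Consequently an equation $\alpha x+\beta y=0$ with $\alpha,\beta\in\Z$ forces $p^3\mid\alpha$ and $p\mid\beta$; this is the tool I will use to read off the conclusion.

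The key step is an iterated left-multiplication computation, in the same spirit as the preceding two lemmas. Set $u_1=x\c y$ and $u_{n+1}=x\c u_n$. I claim
\[u_n=d^{\,n-1}c\,x+d^{\,n}y \qquad\text{for all } n\ge 1.\]
The case $n=1$ is the definition $x\c y=cx+dy$. For the inductive step I would expand
\[u_{n+1}=x\c\bigl(d^{\,n-1}c\,x+d^{\,n}y\bigr)=d^{\,n-1}c\,(x\c x)+d^{\,n}(x\c y)=(d^{\,n-1}ca+d^{\,n}c)x+(d^{\,n-1}cb+d^{\,n+1})y,\]
and then note that $p^2\mid c$ and $p^2\mid a$ give $p^4\mid d^{\,n-1}ca$, so $d^{\,n-1}ca\,x=0$, while $p^2\mid c$ together with $py=0$ gives $d^{\,n-1}cb\,y=0$; what survives is exactly $d^{\,n}c\,x+d^{\,n+1}y$.

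Finally, $u_5=x\c(x\c(x\c(x\c(x\c y))))$ is a product of six elements of $A$, hence $u_5\in A^{[6]}=0$. Therefore $d^{4}c\,x+d^{5}y=0$, and reading off the $\langle y\rangle$-component via the direct sum decomposition gives $p\mid d^{5}$, hence $p\mid d$ — contradicting the assumption.

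I do not expect a real obstacle here: the whole argument is a short induction plus one appeal to nilpotency, structurally identical to the two lemmas just above. The only point demanding care is the bookkeeping of the $p$-adic valuations of the structure constants along the iteration, so that each "error" term genuinely vanishes in $(A,+)$; this is routine given Lemma \ref{first} and $py=0$.
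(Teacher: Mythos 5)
Your proposal is correct and is essentially the paper's own argument: the paper likewise notes that $x\c(x\c y)=c\,x\c x+d\,x\c y=d\,x\c y$ (the term $c\,x\c x$ vanishing for the valuation reasons you spell out), iterates left multiplication by $x$ to get $x\c(x\c(x\c(x\c(x\c y))))=cd^4x+d^5y\in A^{[6]}=0$, and concludes $p\mid d$. Your explicit induction and the contradiction framing are just more detailed bookkeeping of the same computation.
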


\begin{proof}
We have $x\c (x\cdot y) =c x\c x+d x\c y =dx\c y$, hence 
\[x\c(x\c(x\c(x\c (x\c y))))=d^4 x\c y= cd^4 x+d^5 y\in A^{[6]}.\]
Hence $p\mid d$.
\end{proof}

\begin{lemma}
$p\mid f$.
\end{lemma}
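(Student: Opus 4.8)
The plan is to follow the template of the three preceding lemmas: exhibit an element of $A^{[6]}=0$ whose expansion in terms of the structure constants forces $p\mid f$.

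The key observation is the cancellation $e\,(x\c x)=0$. Indeed $x\c x=ax+by$, and Lemma \ref{first} gives $p^2\mid a$ and $p^2\mid e$; hence $p^4\mid ea$, so $(ea)x=0$ since $p^3x=0$, while $p^2\mid eb$, so $(eb)y=0$ since $py=0$. Therefore, using right-distributivity and $y\c x=ex+fy$,
\[(y\c x)\c x=e\,(x\c x)+f\,(y\c x)=f\,(y\c x).\]
Iterating this, I would prove by induction on $k$ that the left-normed product of $y\c x$ with $k$ trailing copies of $x$ equals $f^{k}(y\c x)$; the base case is $k=0$ and the inductive step is just $\bigl(f^{k}(y\c x)\bigr)\c x=f^{k}\bigl((y\c x)\c x\bigr)=f^{k+1}(y\c x)$, using $\Z$-linearity of $\c$.

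Taking $k=4$, the element $((((y\c x)\c x)\c x)\c x)\c x$ is a product of six elements of $A$ (one $y$ and five copies of $x$), so it lies in $A^{[6]}=0$; thus $f^{4}(y\c x)=(f^{4}e)x+f^{5}y=0$. Since $(A,+)\cong C_p\times C_{p^3}$ with $x$ of order $p^{3}$ and $y$ of order $p$, we have $A=\langle x\rangle\oplus\langle y\rangle$, so comparing the $\langle y\rangle$-components yields $f^{5}y=0$, i.e. $p\mid f^{5}$, i.e. $p\mid f$.

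I do not expect a real obstacle; the only thing needing care is the divisibility bookkeeping in $e(x\c x)$ — that the $x$-coordinate dies by $p^2\mid a$ together with $p^3x=0$, and the $y$-coordinate by $p^2\mid e$ together with $py=0$. One could try to match the right-normed products used in the earlier lemmas, but the left-multiplications $y\c(y\c x)$ and $x\c(y\c x)$ collapse to scalar multiples of $y\c y$ and $x\c y$ respectively, not of $y\c x$; it is the right-multiplication $(y\c x)\c x$ that reproduces $y\c x$, so the left-normed product above is the one to use.
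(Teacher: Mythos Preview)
Your proof is correct and follows essentially the same approach as the paper's: both compute $(y\c x)\c x = f\,(y\c x)$ (using $e(x\c x)=0$), iterate to obtain $((((y\c x)\c x)\c x)\c x)\c x = f^{4}(y\c x)=f^{4}ex+f^{5}y\in A^{[6]}=0$, and read off $p\mid f$ from the $y$-component. Your write-up is more careful than the paper's in justifying $e(x\c x)=0$ via the divisibilities from Lemma~\ref{first}, but the argument is identical.
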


\begin{proof}
We have $(y\c x)\cdot x =e x\c x+f y\c x = fy\c x$, hence 
\[((((y\c x)\c x)\c x)\c x)\c x) =f^4 y\c x = f^4e x+ f^5 y\in A^{[6]}.\]
Hence $p\mid f$.
\end{proof}

\begin{lemma}
    $A^{[4]}=0$.
\end{lemma}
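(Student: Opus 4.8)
The plan is to show that under the standing hypotheses of this subsection — $(A,+)\cong C_p\times C_{p^3}$ with $p^3x=py=0$, $A^{[3]}\neq 0$, and the divisibility facts collected in Lemmas \ref{first} through \ref{b=0} (namely $p^2\mid a,c,e,g$ and $p\mid d,f,h$) — every product of four elements vanishes. Since $A$ is generated additively by $x$ and $y$, and the pre-Lie product is biadditive, it suffices to check that every length-$4$ product of the generators $x,y$ (in every bracketing, and there are only two bracketings on four letters) lies in the zero ideal; biadditivity then propagates this to all of $A^{[4]}$.

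First I would record the consequences of the divisibility conditions for the \emph{second-order} products. Each of $x\cdot x, x\cdot y, y\cdot x, y\cdot y$ is an integer combination $\alpha x+\beta y$ in which $p^2\mid\alpha$ and, for the three products other than $x\cdot x$, also $p\mid\beta$; for $x\cdot x$ we have $p\mid\beta$ as well since $b\in\{1,\dots,p\}$ already means $py$-coefficient lives in a group of order $p$, but more usefully $b$ contributes only a multiple of $y$, which is annihilated by $p$. So every element of $A^{[2]}$ lies in $p^2\Z x + p\Z y$ — but $py=0$, so in fact $A^{[2]}\subseteq p^2\Z x$, i.e. $A^{[2]}$ is contained in the order-$p$ subgroup generated by $p^2x$. (This is exactly the phenomenon already seen in Proposition \ref{a3=0cp3}.)

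Next I would compute a generic third-order product. Writing $u\in A^{[2]}$ as $u=\lambda p^2 x$ for some $\lambda\in\Z$, a product $u\cdot v$ or $v\cdot u$ with $v\in\{x,y\}$ becomes $\lambda p^2(x\cdot v)$ or $\lambda p^2(v\cdot x)$ type expressions — and here is the key point: $p^2$ times any second-order product is $p^2(\alpha x+\beta y)=p^2\alpha x$ (as $p^3x=0=py$ kills the rest only if $p\mid\alpha$, but we have the stronger $p^2\mid\alpha$), hence $p^2\alpha x\in p^4\Z x=0$. Thus every product involving a factor from $A^{[2]}$ multiplied (on either side) and then multiplied again is forced into $p^3\Z x=0$; more carefully, $A^{[3]}=A\ast A^{[2]}+A^{[2]}\ast A$ consists of things like $v\cdot(\lambda p^2 x)=\lambda p^2(v\cdot x)=\lambda p^2(\text{mult of }x\text{ with }p^2\mid\text{coeff}) $, landing in $p^4\Z x=0$ — wait, that would give $A^{[3]}=0$, contradicting the hypothesis. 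So the correct bookkeeping is subtler: $A^{[3]}$ is nonzero and must be the order-$p$ group $\langle p^2x\rangle$ itself, with $A^{[2]}$ then necessarily of order $p^2$, sitting inside $\langle px\rangle$; the third-order products land in $p^2\Z x$ with the $p^2$-coefficient coming from $p\cdot(\text{order-}p^2\text{ stuff})$. The honest computation is: take $A^{[2]}$ generated additively by $px$ (forced since $A^{[2]}\neq A^{[3]}\neq 0$ inside $\langle px\rangle\cong C_{p^2}$, using $A^{[2]}\subseteq p\Z x$ which follows from $p\mid a,b,c,d,e,f,g,h$ after the lemmas — note each coefficient is divisible by at least $p$), so a generator of $A^{[3]}$ is $v\cdot(px)=p(v\cdot x)$ or $(px)\cdot v=p(x\cdot v)$, an element of $p^2\Z x$ since $v\cdot x,x\cdot v\in p\Z x$; and then a fourth-order product is $w\cdot(p^2x\cdot\text{unit})$-shaped, $=p^2(w\cdot x)$ or $p^2(x\cdot w)\in p^3\Z x=0$.

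The main obstacle is precisely this bookkeeping of \emph{which} power of $p$ divides the coefficient of $x$ at each stage of the radical filtration: one must track that $A^{[2]}\subseteq p\Z x$, $A^{[3]}\subseteq p^2\Z x$, and hence $A^{[4]}\subseteq p^3\Z x=0$, using at each step that multiplying any element of $p^k\Z x$ by a generator lands in $p^{k+1}\Z x$ (because $x\cdot x, x\cdot y, y\cdot x, y\cdot y$ all lie in $p\Z x + p\Z y$, and the $y$-part dies since $py=0$, while $p^k\cdot p\Z x = p^{k+1}\Z x$). I would present it by first proving the inclusion chain $A^{[i]}\subseteq p^{i-1}\Z x$ for $i=2,3,4$ by induction on $i$ — the inductive step being the one-line observation above about $p^k\Z x$ absorbing a multiplication — and then noting $p^3\Z x=0$ gives $A^{[4]}=0$. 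The only thing to double-check is the base case $A^{[2]}\subseteq p\Z x$, which is immediate from Lemma \ref{first} (giving $p^2\mid a,c,e,g$, a fortiori $p\mid a,c,e,g$) together with the later lemmas $p\mid d,f,h$ and the trivial $p\mid b$-contribution being killed by $py=0$.
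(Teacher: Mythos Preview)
There is a genuine gap. Your entire argument rests on the inclusion $A^{[2]}\subseteq p\Z x$ (from which the chain $A^{[i]}\subseteq p^{i-1}\Z x$ and hence $A^{[4]}\subseteq p^{3}\Z x=0$ would follow). But at this point in the paper the coefficient $b$ in $x\cdot x=ax+by$ is \emph{not} known to vanish modulo $p$: the lemmas preceding this one give only $p^{2}\mid a,c,e,g$ and $p\mid d,f,h$, with no constraint on $b$. Your sentence ``the trivial $p\mid b$-contribution being killed by $py=0$'' conflates the range $b\in\{1,\dots,p\}$ with the divisibility $p\mid b$; in fact $by$ can be any element of the order-$p$ summand $\langle y\rangle$, and since $\langle y\rangle\cap\langle x\rangle=0$ the element $x\cdot x=ax+by$ lies in $\Z x$ only when $by=0$. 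The result $b=0$ is Lemma~\ref{b=0}, which comes \emph{after} the present lemma and whose proof uses the description of triple products obtained here, so invoking it would be circular. You seem to have noticed this tension mid-argument (``wait, that would give $A^{[3]}=0$\dots'') but the final version still leans on the unproven inclusion.

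The paper's proof does not attempt a filtration argument. Instead it computes all eight triple products $x_{1}\cdot(x_{2}\cdot x_{3})$ and $(x_{1}\cdot x_{2})\cdot x_{3}$ directly, carrying the $b$-term honestly, and finds that the only possibly nonzero ones are $y\cdot(x\cdot x)=(x\cdot x)\cdot y=bgx$, $x\cdot(x\cdot x)=(a^{2}+bc)x$, and $(x\cdot x)\cdot x=(a^{2}+be)x$. It then applies the pre-Lie identity $(y\cdot x)\cdot x-y\cdot(x\cdot x)=(x\cdot y)\cdot x-x\cdot(y\cdot x)$ to force $bgx=0$, i.e.\ $p^{3}\mid bg$. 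With this in hand, the single nontrivial $4$-fold product $(x\cdot x)\cdot(x\cdot x)=b^{2}gx$ vanishes, and the remaining ones are routine. The use of the pre-Lie relation to kill the $bg$-term is precisely the missing ingredient in your approach.
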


\begin{proof}
    The only possible non-zero elements of the form $x_1\c (x_2\c x_3)$ and $(x_1\c x_2)\c x_3$ for $x_i\in \{x,y\}$ are 
    \[y\c(x\c x) = bgx = (x\c x)\c y,\quad  x\c(x\c x) = (a^2 +bc) x, \quad (x\c x)\c x = (a^2+be)x.\]

From the pre-Lie relations it follows that 
\[(y\c x)\c x - y\c (x\c x) = (x\c y)\c x - x\c(y\c x),\]
so $y\c (x\c x) = bg x = 0,$ so $p^3 \mid bg.$ We have
\begin{align*}
    (x\c x)\c(x\c x) &= a^2x\c x +b^2 y\c y\\
    &=b^2 gx\\
    &= 0.
\end{align*}
It can be easily checked that every other product of $4$ elements in $A$ is $0$.

\end{proof}

\begin{lemma}\label{b=0}
    $x,y\not\in A^{[2]}$, $p^2\nmid a$ and $b=0$.
\end{lemma}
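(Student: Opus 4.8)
The plan is to determine the abelian group $A^{[2]}$ first, and then to read off the three assertions from its structure together with the pre-Lie identity for triple products of $x$ and $y$.

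\textbf{Step 1: the order of $A^{[2]}$.} By Lemma \ref{first} we have $p\mid a$ and $p^{2}\mid c,e,g$, and by the lemmas above $p\mid d,f,h$; hence $x\cdot y=cx$, $y\cdot x=ex$ and $y\cdot y=gx$ lie in $\langle p^{2}x\rangle$, and $p^{2}(x^{2})=p^{2}ax=0$. Consequently $A^{[2]}$ is generated, as an abelian group, by $x^{2}$ and $p^{2}x$, so $\lvert A^{[2]}\rvert\le p^{2}$. If $\lvert A^{[2]}\rvert=p$ then $A^{[2]}=A^{[3]}$ and the radical chain stabilises, contradicting nilpotency by the argument of \cite[Lemma 10]{MR4353236}; since $A^{[3]}\neq 0$ this is excluded, so $\lvert A^{[2]}\rvert=p^{2}$ and $\lvert A/A^{[2]}\rvert=p^{2}$.

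\textbf{Step 2: identifying $A^{[2]}$ and deducing the claims.} The crux is to show that $A^{[2]}$ is the cyclic subgroup $\langle px\rangle\cong C_{p^{2}}$ with $x^{2}$ a generator. Granting this, the three assertions follow at once: $x\notin A^{[2]}$ since $x$ has order $p^{3}$; $y\notin A^{[2]}=\langle px\rangle$ since $y$ has non-trivial image in the $C_{p}$-summand; the generator $x^{2}$ of $A^{[2]}$ must then have order $p^{2}$, while $cx,ex,gx$ lie in the unique subgroup $\langle p^{2}x\rangle$ of order $p$, and this forces $p^{2}\nmid a$; and $x^{2}\in\langle px\rangle\subseteq\langle x\rangle$ forces the $y$-component of $x^{2}$ to vanish, i.e.\ $b=0$. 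To establish $A^{[2]}=\langle px\rangle$ I would combine the relation $bgx=0$ — obtained exactly as in the lemma proving $A^{[4]}=0$ — with $p^{2}\mid g$ and the remaining pre-Lie identities among $x$ and $y$, in order to exclude that the ``transverse'' component of $x^{2}$ inside $A^{[2]}$ (a non-zero multiple of $y$) survives.

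\textbf{Anticipated obstacle.} Essentially all the difficulty is concentrated in the identification $A^{[2]}=\langle px\rangle$ in Step 2 — equivalently, in showing that $x^{2}$ has order $p^{2}$ and lies in $\langle x\rangle$. One has to rule out every configuration in which $x^{2}$ keeps a non-zero $y$-component while remaining compatible with $A^{[3]}\neq 0$ and $A^{[4]}=0$, and this is where the pre-Lie relations for this particular additive group together with the divisibility constraints on $a,c,e,g$ must be exploited to the limit. Once $A^{[2]}$ has been identified, the remainder is routine computation with cyclic $p$-groups.
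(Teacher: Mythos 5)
Your Step 1 is essentially sound (modulo the small slip that the displayed generators only give $A^{[2]}\subseteq\langle x^2,p^2x\rangle$, which is all you need for the bound), and you are right that \emph{if} $A^{[2]}=\langle px\rangle$ were established, then $x,y\notin A^{[2]}$, $p^2\nmid a$ and $b=0$ would follow by routine computations in $C_p\times C_{p^3}$. But the proposal never establishes that identification: Step 2 is precisely the content of the lemma, and you explicitly defer it (``I would combine \dots'') and flag it as the place where ``essentially all the difficulty is concentrated''. As written this is a plan with the central step missing, not a proof. For comparison, the paper does not try to identify $A^{[2]}$ at all; it argues by contradiction that $x\in A^{[2]}$ (resp.\ $y\in A^{[2]}$) forces $A^{[2]}=A^{[3]}$, hence $A^{[3]}=0$ by Lemma 10 of \cite{MR4353236}, and only then reads off $b=0$ and $p^2\nmid a$.

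More seriously, the route you sketch cannot be completed with the tools you name. The constraints available at this stage --- $p\mid a$, $p^2\mid c,e,g$, $p\mid d,f,h$, the relation $bgx=0$, $A^{[4]}=0$, and the pre-Lie identity evaluated on the generators $x,y$ --- do not exclude a surviving $y$-component of $x^2$. Indeed, the multiplication $x\cdot x=px+y$, $x\cdot y=y\cdot x=y\cdot y=0$ satisfies all of these conditions (every triple product is a multiple of $p^2x$, so both generator identities hold, $A^{[3]}=\langle p^2x\rangle\neq 0$, $A^{[4]}=0$), yet here $b\neq 0$ and $A^{[2]}=\langle px+y\rangle\neq\langle px\rangle$; note this table even satisfies $x,y\notin A^{[2]}$. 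So ``exploiting the pre-Lie relations and divisibility constraints to the limit'' will not, by itself, deliver Step 2: whatever argument fills the gap must bring in genuinely different input (such as the paper's contradiction arguments about $A^{[2]}/A^{[3]}$, or a renormalisation of the additive basis $x,y$), and your sketch gives no indication of what that input would be. This is a concrete gap, not merely an omitted routine verification.
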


\begin{proof}
    Suppose $x\in A^{[2]}$. Then $x\c x=0$ and $x\c y, y\c x\in A^{[3]}$, so $p^2x\in A^{[3]}$. It follows that $y\c y\in A^{[3]}$, implying that $A^{[2]}=A^{[3]}$, so $A^{[3]}=0$ by Lemma 10 of \cite{MR4353236}. Hence, $x\not\in A^{[2]}$.

    Suppose $y\in A^{[2]}$. Then, similarly, $p^2x\in A^{[3]}$ and $x\c x \not\in A^{[3]}$, as otherwise $A^{[3]}=0$. Now notice that the only non-zero products of three elements in $A$ are multiples of $x\c(x\c x) $ and $(x\c x)\c x$, that is, multiples of $p^2x$. Hence $y\not\in A^{[3]}$. Then $x\c x = ax+by\not\in A^{[3]}$ implies $px\not\in A^{[3]}$. It follows that $|A^{[2]}/A^{[3]}|\geq p^2$, contradicting the fact that the only elements of $A^{[2]}/A^{[3]}$ are  $\Z (x\c x)$.

    It follows that $x,y\not\in A^{[2]}$, so $b=0$, as otherwise $x\c x\not\in A^{[2]}$. Notice that $p^2x\in A^{[3]}$, so $p^2\nmid a$, as otherwise $x\c x \in A^{[3]}$.
\end{proof}

\begin{prop}
    Let $A$ be a pre-Lie ring with $(A,+)\cong C_p\times C_{p^3}$, generated by $x$ and $y$ as an abelian group with $p^3x=py=0$. Further, suppose we have
\[x\cdot x = ax, \;\; x\cdot y = cx,\;\; y\cdot x = ex, \;\; y\cdot y = gx\]
    for $a,c,e,g\in \{1, 2,\dots, p^3\}$ such that $p^2\mid c,e,g$, $p\mid a$ and $p^2\nmid a$. Then $A$ is a well-defined pre-Lie ring with $A^{[4]}=0$ of cardinality $p^4$. 
\end{prop}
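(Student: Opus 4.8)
The plan is to check three things in turn: that the four prescribed products extend to a well-defined biadditive operation on $A$, that this operation obeys the pre-Lie identity, and that $A^{[4]}=0$ (the cardinality statement needs no argument, since $|C_p\times C_{p^3}|=p^4$). For well-definedness, write $A=\Z x\oplus\Z y$ with $\Z x\cong C_{p^3}$ and $\Z y\cong C_p$, and extend the given products $\Z$-biadditively. This extension is legitimate exactly when the prescribed products respect the additive relations $p^3x=0$ and $py=0$, i.e.\ when $p^3(x\cdot x)=0$ and $p(x\cdot y)=p(y\cdot x)=p(y\cdot y)=0$. The first is automatic, and the other three follow from $p^2\mid c,e,g$, which gives $p^3\mid pc,\,pe,\,pg$. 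Left and right distributivity then hold by construction.

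Next I would verify the pre-Lie identity
\[(u\cdot v)\cdot w-u\cdot(v\cdot w)=(v\cdot u)\cdot w-v\cdot(u\cdot w).\]
By triadditivity it is enough to take $u,v,w\in\{x,y\}$; the identity is trivial when $u=v$ and is unchanged under swapping $u$ and $v$, so only the triples $(x,y,x)$ and $(x,y,y)$ remain. A short computation gives, in both cases, right-hand side $0$, and left-hand side $a(c-e)x$ for $(x,y,x)$ and $(c^2-ag)x$ for $(x,y,y)$. Both vanish in $A$: $p\mid a$ together with $p^2\mid c,e$ forces $p^3\mid a(c-e)$, while $p^2\mid c$ gives $p^3\mid c^2$ and $p\mid a$, $p^2\mid g$ give $p^3\mid ag$, so $p^3\mid c^2-ag$. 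Hence $A$ is a pre-Lie ring.

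Finally I would prove $A^{[4]}=0$ by bookkeeping of $p$-divisibility. Because $p\mid a$ and $p^2\mid c,e,g$, every product $u\cdot v$ with $u,v\in\{x,y\}$ lies in $p\,\Z x$. Hence every three-fold product of elements of $\{x,y\}$ lies in $p^2\,\Z x$: it has the shape $(\lambda x)\cdot z$ or $z\cdot(\lambda x)$ with $p\mid\lambda$ and $z\in\{x,y\}$, and this equals $\lambda ax\in p^2\Z x$ if $z=x$ and equals $\lambda cx$ or $\lambda ex$, both in $p^3\Z x=0$, if $z=y$. Iterating once more, every four-fold product of elements of $\{x,y\}$ lies in $p^3\,\Z x=0$: it is of one of the forms $(\text{three-fold})\cdot z$, $z\cdot(\text{three-fold})$ or $(\text{two-fold})\cdot(\text{two-fold})$, and in each case the coefficient acquires a further factor $p$ beyond $p^2$ (for instance $p^2(x\cdot x)=p^2ax\in p^3\Z x$, and $(\lambda x)\cdot(\mu x)=\lambda\mu ax\in p^3\Z x$ when $p\mid\lambda,\mu$). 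Since $A^{[4]}$ is the sum of the subgroups obtained from the five bracketings of a four-fold product, and each such subgroup is quadrilinear, $A^{[4]}$ is generated by four-fold products of $x$ and $y$, so $A^{[4]}=0$.

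All the computations here are routine; the only points needing care are the exhaustive case analysis in the last step — over the five bracketings and all placements of $x$ and $y$ — and confirming that the pre-Lie identity imposes no constraint beyond the stated divisibilities, which is precisely what makes this proposition a converse to the preceding lemmas.
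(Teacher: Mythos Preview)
Your proof is correct and follows essentially the same approach as the paper: reduce the pre-Lie identity to the two triples $(x,y,x)$ and $(x,y,y)$, observe that all three-fold products in generators land in $p^2\Z x$ (the paper phrases this as ``the only non-zero products of $3$ elements are multiples of $x\cdot(x\cdot x)$ and $(x\cdot x)\cdot x$''), and deduce $A^{[4]}=0$ from one further multiplication. Your treatment is in fact more careful than the paper's on two points---you explicitly verify well-definedness of the biadditive extension, and you account for all bracketings of four-fold products rather than only $(x\cdot x)\cdot(x\cdot x)$---while your dismissal of the cardinality claim is justified since $(A,+)$ is given.
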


\begin{proof}
    To check that pre-Lie relations hold in $A$ it suffices to check whether
    \[x\c(y\c x)-(x\c y)\c x = y\c (x\c x)- (y\c x)\c x\]
    and
    \[x\c(y\c y)-(x\c y)\c y = x\c (y\c y)- (x\c y)\c y\]
    
    hold, as we can write any element of $A$ as $\alpha x+\beta y$ for $\alpha\in \{1,2,\dots, p^3\}$, $\beta\in\{1,2,\dots, p\}$. Notice that the only non-zero products of $3$ elements of $A$ are multiples of $x\c(x\c x)$ and $(x\c x)\c x$, so the pre-Lie relations hold. We also have
    \[(x\c x)\c (x\c x) = a^3 x = 0,\]
    so it follows that $A^{[4]}=0.$

    Notice that $A$ has cardinality $p^4$ as $A^{[3]} = \{kp^2x\mid k\in\{1,2,\dots,p\}\}$, $A^{[2]}/A^{[3]} = \{kpx+A^{[3]}\mid k\in \{1,2,\dots,p\}\}$, and $A/A^{[2]} = \{kx+ly+A^{[2]}\mid k,l\in \{1,2,\dots,p\}\}$.
\end{proof}

\subsection{Nilpotent pre-Lie rings with additive group $C_{p^2}\times C_{p^2}$}\label{cp2cp2}

Braces with additive group $C_{p^2} \times C_{p^2}$ are right nilpotent, hence we can calculate them from nilpotent pre-Lie rings with the same additive group. In this section we characterise pre-Lie rings $(A,+,\c)$ with the additive group $(A,+)\cong C_{p^2} \times C_{p^2}$.

Note that by the same argument as in Lemma 10 of \cite{MR4353236} we have $A^{[6]}=0$. Further, Lemma 10 shows that if $A\neq A^{[2]}\neq A^{[3]}\neq A^{[4]}\neq 0$, then we have
 $|A^{[i]}/A^{[i+1]}|=p$ for $i=1,\dots, 3$, so $|A^{[2]}|=p^3$, $|A^{[3]}|=p^2$, and $|A^{[4]}|=p$. Moreover, if $A^{[i]}=A^{[i+1]}$ for some $i=1,2,3$, then $A^{[3]}=0$. 

\subsubsection{Pre-Lie rings $A$ with $A^{[2]} \neq 0$ and $A^{[3]} = 0$}

In this section we assume that $A^{[2]}\neq 0$ and $A^{[3]}=0$.

\begin{prop}
    Let $A$ be a pre-Lie ring with $A^{[2]}\neq 0$ and $A^{[3]}=0$. Suppose the additive group $(A,+)\cong C_{p^2} \times C_{p^2}$ is generated by $x$ and $y$, and that $A$ is generated by two elements as a pre-Lie ring. Then the following holds:
    \begin{enumerate}
        \item $A^{[2]}$ is generated by $px$ and $py$ as an additive group.
        \item  We have
        \[x\c x= ax+by,\quad  x\c y = cx+dy,\quad  y\c x = ex+fy,\quad  y\c y = gx+hy,\]
        where $a,b,c,d,e,f,g,h\in \{1,2,\dots,p^2\}$, not all zero, with $p\mid a,b,c,d,e,f,g,h$.
    \end{enumerate}
\end{prop}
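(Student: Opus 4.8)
The plan is to mimic the structure of the earlier propositions (in particular Proposition~\ref{a3=0cp3} and Proposition~\ref{gens}), adapting the divisibility arguments to the additive group $C_{p^2}\times C_{p^2}$. First I would record that since $(A,+)\cong C_{p^2}\times C_{p^2}$ with generators $x,y$ of additive order $p^2$, every element of $A$ can be written as $\alpha x+\beta y$ with $\alpha,\beta\in\{1,\dots,p^2\}$, and in particular all four products $x\c x, x\c y, y\c x, y\c y$ have the form displayed in the statement for suitable coefficients $a,b,\dots,h$. Since $A\neq A^{[2]}$ (as $A^{[2]}\neq 0$ forces, via the remark preceding the subsection, $A^{[3]}=0$ to be the genuine top of the chain and $A/A^{[2]}$ to be nontrivial) and $A$ is generated by $x,y$, by the argument of Proposition~\ref{gens} neither $x$ nor $y$ lies in $A^{[2]}$, so $A/A^{[2]}$ has order $p^4/\abs{A^{[2]}}\ge p^2$ and is generated by the images of $x$ and $y$.

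Next I would pin down $A^{[2]}$. The key observation is that $A^{[3]}=0$ means any product of three elements vanishes. Using this, I would show $p\mid a,b,c,d,e,f,g,h$: for instance, from $p^2(x\c x)=0$ and $x\c x=ax+by$ we get nothing directly, but considering iterated products such as $(x\c x)\c x = a(x\c x)\c\!\ldots$ — more precisely, $x\c(x\c x)\in A^{[3]}=0$, and expanding $x\c(x\c x)=x\c(ax+by)=a(x\c x)+b(x\c y)=a(ax+by)+b(cx+dy)=(a^2+bc)x+(ab+bd)y$, so $p^2\mid a^2+bc$ and $p^2\mid ab+bd$. Symmetric computations with the other triple products, together with the observation (as in the $C_p\times C_{p^3}$ case) that $A$ is powerful so $\abs{A^{[2]}}\le p^2$ cannot hold if some coefficient is a unit — one gets a contradiction with $A/A^{[2]}$ being at least $2$-dimensional over $\F_p$ unless every structure constant is divisible by $p$. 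Concretely: if, say, $p\nmid a$, then $x\c x=ax+by$ would be a $\Z$-generator of a subgroup of $A^{[2]}$ of order $p^2$, and then $px\in\Z(x\c x)+\Z(\text{other products})$, which would pull $px$ (hence a second independent class) into $A^{[2]}$, contradicting $\abs{A/A^{[2]}}\ge p^2$. Once $p\mid a,b,c,d,e,f,g,h$, every product $i\c j$ lies in $pA=\Z(px)+\Z(py)$, so $A^{[2]}\subseteq \Z(px)+\Z(py)$; and since $A^{[2]}\neq 0$ and (by the "not all zero" hypothesis, which I would justify from $A$ being generated by two elements as a pre-Lie ring with $A^{[2]}\neq0$) at least one product is nonzero, combined with the requirement that $A$ have the full cardinality $p^4$ forcing $\abs{A^{[2]}}=p^2$, we conclude $A^{[2]}=\Z(px)+\Z(py)=pA$, which is exactly item~1.

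The step I expect to be the main obstacle is the dichotomy argument showing $p\mid a,b,\dots,h$ rather than merely $p\mid(\text{the ``cross'' constants})$ as in Proposition~\ref{a3=0cp3}: there the additive group was $C_p\times C_{p^3}$ so the element $y$ had order $p$, which immediately killed many terms, whereas here both generators have order $p^2$ and one must run the iterated-product argument in all four cases symmetrically and combine them. I would handle this by arguing uniformly: for each $i,j\in\{x,y\}$, the element $i\c j$ has additive order dividing $p^2$; the triple products $i\c(i\c j)$, $(i\c j)\c i$, etc., all vanish, giving a system of congruences mod $p^2$ among $\{a,\dots,h\}$; then a short linear-algebra-over-$\Z/p^2$ argument, using that $A/A^{[2]}$ must surject onto a $2$-dimensional $\F_p$-space (so $A^{[2]}\subseteq pA$ is forced), yields $p$ divides every structure constant. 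The remaining bookkeeping — that $A^{[2]}$ is then exactly $pA$ and not smaller — follows from the cardinality constraint $\abs{A}=p^4$ together with $A^{[3]}=0$ and the established fact that $\abs{A/A^{[2]}}\ge p^2$, forcing equality $\abs{A^{[2]}}=p^2$.
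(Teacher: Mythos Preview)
Your approach and the paper's differ. The paper argues by contradiction: it supposes $A^{[2]}$ is the cyclic group $\langle px\rangle$ of order $p$, writes each basic product as a multiple of $x$, and uses a triple-product vanishing to force all constants to be divisible by $p^2$, yielding $A^{[2]}=0$. You instead go for a direct proof that $p\mid a,b,\dots,h$ (so $A^{[2]}\subseteq pA$) and then try to promote the inclusion to an equality by a cardinality count. Your route to the inclusion can be made rigorous, but the triple-product equations you write down (e.g.\ $p^2\mid a^2+bc$, $p^2\mid ab+bd$) do not on their own isolate $p\mid a$ or $p\mid b$; the clean argument is the structural one you only gesture at: since $A$ is nilpotent and requires two pre-Lie generators, $A/A^{[2]}$ requires two generators as an abelian group, and every subgroup of $C_{p^2}\times C_{p^2}$ with non-cyclic quotient is contained in $pA$. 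No product computations are needed for item~2.

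There is a genuine gap in your last paragraph. From $\abs{A/A^{[2]}}\ge p^2$ you obtain only $\abs{A^{[2]}}\le p^2$, never equality, so the ``bookkeeping'' you invoke does not force $A^{[2]}=pA$. Concretely, set $x\cdot x=px$ and all other basic products equal to $0$: then $A^{[3]}=0$, $A$ is genuinely $2$-generated as a pre-Lie ring (the subring generated by $x$ alone is $\Z x$, which misses $y$), and $A^{[2]}=\langle px\rangle$ has order $p$, strictly smaller than $pA$. So item~1, read literally as $A^{[2]}=\langle px,py\rangle$, cannot be recovered by your counting argument; the argument you sketch delivers only the inclusion $A^{[2]}\subseteq\langle px,py\rangle$, which is exactly what item~2 encodes. (The paper's contradiction step has the same blind spot---its claim that $a^2x=0$ forces $p^2\mid a$ is already automatic once $p\mid a$---so this is an issue with the statement rather than with your overall strategy, but your write-up should acknowledge that only the inclusion is what the method actually proves.)
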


\begin{proof}
    Notice that we can let $x$ and $y$ be generators of $A$ as a pre-Lie ring. Arguing similarly to Proposition \ref{gens} $x,y\not\in A^{[2]}$. Suppose $A^{[2]}$ is generated by $px$ as an additive group. We can let 
    \[x\c x= ax,\quad  x\c y = cx,\quad  y\c x = ex,\quad  y\c y = gx\]
    for $a,b,c,d,e,f,g,h\in \{1,2,\dots,p^2\}$ with $p\mid a,c,e,g$. Now notice that $(x\c x)\c x = a^2x = 0$ implying $p^2\mid a$. Similarly $p^2\mid c,e,g$, so $A^{[2]}=0$, which is a contradiction. Therefore $A^{[2]}$ is generated by $px$ and $py$ as an additive group.
\end{proof}

\begin{prop}
    Let $A$ be a pre-Lie ring with $A^{[2]}\neq 0$ and $A^{[3]}=0$. Suppose the additive group $(A,+)\cong C_{p^2} \times C_{p^2}$ is generated by $x$ and $y$, and that $A$ is generated by one element as a pre-Lie ring. Then the following holds:
    \begin{enumerate}
        \item $A$ is generated by $x$ and $x^2$ as an additive group.
        \item $y = \alpha x+\beta x^2$ for some $\alpha, \beta \in \{1,2,\dots,p^2\}$ such that $p\nmid \beta.$
        \item $x^2\not\in pA$.
        \item $A^{[2]}$ is generated by $x^2$ as an additive group.

    \end{enumerate}
\end{prop}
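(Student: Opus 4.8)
The plan is to mimic the structure of the earlier one-generator propositions (for $C_p\times C_{p^3}$ and the other parts of the $C_{p^2}\times C_{p^2}$ case), translating each claim into a statement about the single generator $x$ and the element $x\cdot x$.

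First I would show that $A$ is generated by $x$ and $x^2$ as an additive group. Since $A$ is generated by $x$ as a pre-Lie ring and $A^{[3]}=0$, every element of $A$ is a $\Z$-linear combination of $x$ and products of two elements, i.e.\ of $x$ and $x^2=x\cdot x$, because all longer products vanish and $A^{[2]}$ is spanned by $x\cdot x$ alone (each of $x,x^2$ being the only ``letters'' available). As $(A,+)\cong C_{p^2}\times C_{p^2}$ has rank $2$, and $x,x^2$ generate it, they must form a generating pair with each of order dividing $p^2$. In particular $y=\alpha x+\beta x^2$ for some $\alpha,\beta\in\{1,\dots,p^2\}$.

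Next I would pin down $p\nmid\beta$. If $p\mid\beta$, then $x^2\in pA$ would follow after also controlling $\alpha$: the subgroup $\langle x,px^2\rangle$ would then contain $y$ and $x$, hence all of $A$, but $\langle x, px^2\rangle$ has order at most $p^2\cdot p = p^3 < p^4$ unless $x$ itself has order $p^2$ and $px^2\notin\langle x\rangle$ — one checks via the order count (using $|A^{[2]}|$ and $A^{[3]}=0$, so $px^2$ lies in $A^{[3]}=0$ once we know $px\cdot$ anything vanishes, exactly as in the $C_p\times C_{p^3}$ argument where $px^2=0$ gave the contradiction) that this forces $|A|\le p^3$, a contradiction. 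So $p\nmid\beta$, which simultaneously gives $x^2\notin pA$ (an element of $pA$ could not have a unit coefficient on a basis vector $x^2$ in the basis $\{x,x^2\}$) and shows $\{x,x^2\}$ is actually a basis realizing the $C_{p^2}\times C_{p^2}$ structure, so $A^{[2]}=\Z x^2$ is generated by $x^2$ as an additive group. I would also need to note $x^2$ has order exactly $p^2$: if it had order $p$ then $|A|=|\langle x,x^2\rangle|\le p^2\cdot p<p^4$.

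The main obstacle I expect is the bookkeeping in the step ruling out $p\mid\beta$: one has to be careful that ``$A$ generated by one element'' is being used correctly (so that $y$ really is expressible in $x$ and $x^2$ with no hidden higher terms, which is fine since $A^{[3]}=0$), and that the nilpotency relations force enough vanishing — specifically that $px^2$ behaves like an element of $A^{[3]}=0$ under further multiplication, mirroring the computation $px=-\alpha^{-1}\beta\, x\cdot px=\alpha^{-2}\beta^2 x\cdot(x\cdot x)=0$ from the $C_p\times C_{p^3}$ one-generator proposition but now with the roles of $p$ and $p^2$ adjusted to the $C_{p^2}\times C_{p^2}$ setting. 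Everything else is a direct order-counting argument once the additive basis $\{x,x^2\}$ is established.
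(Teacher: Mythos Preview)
Your overall strategy matches the paper's: take $x$ to be the pre-Lie generator, use $A^{[3]}=0$ to write every element---in particular $y$---as a $\Z$-combination of $x$ and $x^2$, and then pin down $p\nmid\beta$ and $x^2\notin pA$. Items (1) and (4) fall out exactly as you say.

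Where you diverge is in the argument for $p\nmid\beta$ and $x^2\notin pA$, and here your presentation is both more complicated and partly incorrect. The paper dispatches both claims with the same one-line observation: since $(A,+)\cong C_{p^2}\times C_{p^2}$, every element has order dividing $p^2$, so $p^2x^2=0$. Thus if $p\mid\beta$ then $py=p\alpha x+p\beta x^2=p\alpha x$, giving $p(y-\alpha x)=0$, which contradicts $x,y$ being independent generators of $C_{p^2}\times C_{p^2}$. Likewise if $x^2\in pA$ then again $py=p\alpha x$. No order-counting or appeal to the $C_p\times C_{p^3}$ computation is needed.

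Your order-count idea (if $p\mid\beta$ then $A=\langle x,y\rangle\subseteq\langle x,px^2\rangle$, which has size at most $p^3$) does work, but your ``unless $x$ has order $p^2$ and $px^2\notin\langle x\rangle$'' is a false caveat: the bound $|\langle x,px^2\rangle|\le p^2\cdot p$ holds unconditionally since $|px^2|$ divides $p$. The remark that ``$px^2$ lies in $A^{[3]}=0$ once $px\cdot$ anything vanishes'' is also wrong: $px^2=p(x\cdot x)$ has no a priori position in the $A^{[i]}$ filtration, and $px$ does not lie in $A^{[2]}$. Finally, the $C_p\times C_{p^3}$ computation you propose to mirror was used there to force $p\mid\alpha$ from the relation $py=0$; here no element has order $p$, so that argument does not transfer. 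None of this is fatal---your order count suffices once the spurious exception is dropped---but the paper's route via $p(y-\alpha x)=0$ is the clean way through.
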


\begin{proof}
    We can assume $x$ is the generator of $A$ as a pre-Lie ring. Arguing similarly to Proposition \ref{2el} $x\not\in A^{[2]}$. Hence, we can let $y = \alpha x+\beta x^2$ for some $\alpha, \beta \in \{1,2,\dots,p^2\}$. Notice that $p\nmid \beta$ as otherwise $py = p\alpha x$, contradicting the fact that $x$ and $y$ are independent generators of $A$ as an additive group. Now if $x^2\in pA$, then $py = p\alpha x$, again, contradicting the fact that $x$ and $y$ are additive generators of $A$. So $x^2\not\in pA$. We now show $x$ and $x^2$ generate $A$ as an additive group. Suppose $kx = lx^2$ for some $k,l\in\{1,2,\dots,p^2\}$. Further, suppose $p\nmid k$. Then we can write
    \[x=k^{-1}lx^2 = k^{-2}l^2x\c x^2 = 0.\]
    Hence, $p\mid k$. Now, since $x^2\not\in pA$, we have $p\mid l$, so
    \[px = l'px^2 = l'^2px\c x^2 = 0.\]
    Therefore, $x$ and $x^2$ are independent generators of $A$ as an additive group.
\end{proof}

\subsubsection{Pre-Lie rings $A$ with $A^{[3]} \neq 0$ and $A^{[4]} = 0$}

\begin{prop}\label{2genimpliesa3=0}
    Let $A$ be a pre-Lie ring with $A^{[3]}\neq 0$ and $A^{[4]}=0$. Suppose the additive group $(A,+)\cong C_{p^2} \times C_{p^2}$ is generated by $x$ and $y$. If $A$ is generated by two elements as a pre-Lie ring, then $A^{[3]} = 0$.
\end{prop}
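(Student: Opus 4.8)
We use the standing hypotheses $A^{[3]}\neq 0$ and $A^{[4]}=0$ to deduce $A^{[3]}=0$; in particular this will show that no pre-Lie ring satisfying all the assumptions exists. First, since $A$ is nilpotent and $A^{[3]}\neq 0$, the observation recalled at the start of Section~\ref{cp2cp2} (the argument of Lemma~10 in \cite{MR4353236}, which shows that $A^{[i]}=A^{[i+1]}$ for some $i\in\{1,2,3\}$ already forces $A^{[3]}=0$) tells us that the radical chain is strictly decreasing:
\[A\supsetneq A^{[2]}\supsetneq A^{[3]}\supsetneq A^{[4]}=0.\]
Counting orders in the finite $p$-group $(A,+)$ then gives $|A^{[3]}|\geq p$ and $|A^{[2]}|\geq p\,|A^{[3]}|\geq p^2$, so that $|A/A^{[2]}|\leq p^2$.

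The crux is to show that $A/A^{[2]}$ is \emph{not} cyclic. Since $A^{[2]}=A\c A$ is an ideal containing every $\c$-product of length at least two, the images in $A/A^{[2]}$ of any pre-Lie generating set of $A$ generate $A/A^{[2]}$ as an abelian group; hence it suffices to establish the converse Nakayama-type statement: if $A=\Z u+A^{[2]}$ for a single $u$, then the pre-Lie subring $B$ generated by $u$ is all of $A$, which would make $A$ one-generated and contradict the hypothesis. To see this I would use $A^{[4]}=0$ (hence $A^{[k]}=0$ for $k\geq 4$): from $B+A^{[2]}=A$ one gets $A^{[2]}=A\c A=(B+A^{[2]})\c(B+A^{[2]})\subseteq B+A\c A^{[2]}+A^{[2]}\c A+A^{[2]}\c A^{[2]}\subseteq B+A^{[3]}$, hence $A=B+A^{[3]}$; feeding this back into $A^{[3]}=A\c A^{[2]}+A^{[2]}\c A$ and using $A\c A^{[3]},\,A^{[3]}\c A,\,A^{[3]}\c A^{[3]}\subseteq A^{[4]}=0$ shows every term lies in $B$, so $A^{[3]}\subseteq B$, then $A^{[2]}\subseteq B+A^{[3]}\subseteq B$, and finally $A=B+A^{[2]}=B$. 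This Nakayama reduction is the step I expect to have to handle most carefully, and it is precisely the place where the hypothesis that $A$ needs at least two pre-Lie generators (and not one) is used.

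Granting the claim, $A/A^{[2]}$ is a non-cyclic quotient of $C_{p^2}\times C_{p^2}$, so $|A/A^{[2]}|\geq p^2$; with $|A/A^{[2]}|\leq p^2$ this forces $A/A^{[2]}\cong C_p\times C_p$. Consequently $pA\subseteq A^{[2]}$, and since $|pA|=p^2=|A^{[2]}|$ we conclude $A^{[2]}=pA$. Then
\[A^{[3]}=A\c A^{[2]}+A^{[2]}\c A=A\c(pA)+(pA)\c A=p\,(A\c A)=p\,A^{[2]}=p^2A=0,\]
since $C_{p^2}\times C_{p^2}$ has exponent $p^2$; this is the desired conclusion.

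The only genuine obstacle is the Nakayama-type reduction in the second paragraph. A more computational alternative would be to fix additive generators $x,y$ with $p^2x=p^2y=0$, write out the eight structure constants of $\c$ on $x,y$, and push divisibility estimates as in Lemmas~\ref{first}--\ref{b=0} for the $C_p\times C_{p^3}$ case to show directly that $A^{[3]}$ vanishes; but the structural route above is shorter, so I would present that.
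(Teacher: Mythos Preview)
Your argument is correct and follows essentially the same line as the paper: both reduce to showing $A^{[2]}=pA$ and then conclude $A^{[3]}=pA^{[2]}=p^{2}A=0$ by bilinearity. The only cosmetic difference is that the paper takes $x,y$ to be simultaneously additive and pre-Lie generators, argues directly that $x,y\notin A^{[2]}$, and then case-splits on the isomorphism type of $(A^{[2]},+)$ among the subgroups of $C_{p^{2}}\times C_{p^{2}}$, whereas you reach $A/A^{[2]}\cong C_{p}\times C_{p}$ via a clean Nakayama step; your version avoids the paper's slightly informal ``WLOG $A^{[2]}$ contains $x$'' move.
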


\begin{proof}
    We can assume $A$ is generated by $x$ and $y$ as a pre-Lie ring. Then $x\not\in A^{[2]}$, as otherwise $A$ is generated by $1$ element as a pre-Lie ring or $x=0.$ Similarly, $y\not\in A^{[2]}$. 
    Now, $(A^{[2]},+)$ is isomorphic to $C_p, C_p\times C_p, C_{p^2}$ or $C_p\times C_{p^2}$. If $(A^{[2]},+)\cong C_p$, then $A^{[3]}\subseteq pA^{[2]} = 0$. If $(A^{[2]}, +)\cong C_{p^2}$ or $C_p\times C_{p^2}$, then WLOG $A^{[2]}$ contains $x$, which is a contradiction. Suppose $(A^{[2]},+)\cong C_p\times C_p$ is generated by $px$ and $py$. Then $A^{[2]}\cong pA$, so $A^{[3]}=pA^{[2]}=0.$
\end{proof}

Given that in this section we wish to consider pre-Lie rings with $A^{[3]}\neq 0$ and $A^{[4]}=0$, we can assume that $A$ is generated by $y$ as a pre-Lie ring.

\begin{lemma}
Let $A$ be a pre-Lie ring with $A^{[3]}\neq 0$ and $A^{[4]}=0$. Suppose the additive group $(A,+)\cong C_{p^2} \times C_{p^2}$ is generated by $x$ and $y$.
 Then   $(A,+)$ is generated by $y$ and $y^2$ as an abelian group.
\end{lemma}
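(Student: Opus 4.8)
The plan is to reduce everything modulo $pA$ to a two-dimensional pre-Lie algebra over $\F_p$, where the statement becomes a short linear-algebra computation. By the discussion preceding the lemma, the hypothesis $A^{[3]}\neq 0$ (via Proposition~\ref{2genimpliesa3=0}) lets us assume $A$ is generated by $y$ as a pre-Lie ring; writing $y^2:=y\c y$, I must then show that $y$ and $y^2$ generate $(A,+)$.

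The first step is to record that $(A,+)\cong C_{p^2}\times C_{p^2}$ has exponent $p^2$, so $p^2A=0$ and $|A|=p^4$, and to deduce $y\notin pA$. Indeed, if $y=pa$ for some $a\in A$, then $y\c y=p^2(a\c a)=0$, hence every iterated product of $y$ vanishes, so the pre-Lie subring generated by $y$ is $\Z y$, which has order at most $p^2$ --- contradicting that this subring is all of $A$.

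The second step is to pass to $\bar A:=A/pA$. Since $A\c pA+pA\c A\subseteq pA$, the pre-Lie product descends to $\bar A$, which is thus a pre-Lie algebra over $\F_p$ (as $p\bar A=0$) of dimension $2$ (as $A/pA\cong C_p\times C_p$); it is generated by $\bar y$ as a pre-Lie algebra over $\F_p$, being the image of the pre-Lie ring generated by $y$, and $\bar y\neq 0$ by the first step. If we had $\bar y\c\bar y\in\F_p\bar y$, an immediate induction on length would show that every product of copies of $\bar y$ lies in $\F_p\bar y$, so the pre-Lie subalgebra generated by $\bar y$ would be the one-dimensional space $\F_p\bar y$, contradicting $\dim_{\F_p}\bar A=2$. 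Hence $\bar y$ and $\overline{y^2}$ are linearly independent and therefore span $\bar A$.

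The final step is the standard lifting argument for finite $p$-groups: if the images of $y$ and $y^2$ generate $A/pA$, then $B:=\langle y,y^2\rangle$ satisfies $A=B+pA$, whence $A/B=p(A/B)$ and so $A/B=0$, i.e. $(A,+)=\langle y,y^2\rangle$. I do not foresee a genuine obstacle here: the only points requiring (routine) care are that the pre-Lie structure descends to $A/pA$ and that $1$-generation is inherited by the quotient. It is perhaps worth remarking that $A^{[4]}=0$ plays no direct role in this particular argument --- only its indirect consequence, through Proposition~\ref{2genimpliesa3=0}, that $A$ is generated by a single element as a pre-Lie ring.
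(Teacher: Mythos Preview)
Your proof is correct and takes a genuinely different route from the paper's. The paper argues in two direct steps: first it shows $y^{2}\notin pA$ by using $A^{[4]}=0$ to write the second additive generator $x$ as $\alpha y+\beta y^{2}+\gamma\,y^{2}\c y+\delta\,y\c y^{2}$, so that $y^{2}\in pA$ would force $p(x-\alpha y)=0$; then it shows independence of $y$ and $y^{2}$ by an iterated-substitution argument, feeding a relation $ky=ly^{2}$ back into itself and using nilpotency to drive both sides to zero. Your argument instead establishes the (easier) fact $y\notin pA$, passes to the two-dimensional $\F_p$-pre-Lie algebra $A/pA$, uses one-generation there to force $\bar y,\overline{y^{2}}$ to be a basis, and then lifts via the Nakayama-type step $A=B+pA\Rightarrow A=B$.

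The trade-offs: your approach is more conceptual and, as you observe, does not use $A^{[4]}=0$ beyond its role in securing one-generation; in particular the same argument would show that any pre-Lie ring with additive group $C_{p^{2}}\times C_{p^{2}}$ that is one-generated is additively generated by $y$ and $y^{2}$. The paper's argument is more hands-on and yields the auxiliary fact $y^{2}\notin pA$ explicitly along the way (it is used again in the subsequent computations), whereas in your approach this fact is recovered only a posteriori from the conclusion. Both are short; yours is the cleaner of the two.
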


\begin{proof}
Suppose $y^2\in pA$. Then by Proposition \ref{2genimpliesa3=0} we can write
\[x=\alpha y + \beta y^2 +\gamma y^2\c y +\delta y\c y^2 = \alpha y + pz\]
for some $z\in A$. It follows that $p(x-\alpha y)=0$, which is a contradiction as $x$ and $y$ are independent generators of $(A,+)$.
    Now suppose
    \[ky = ly^2.\]
    Further, suppose $p\nmid k$. Then we have
    \[y = k^{-1}ly^2 = (k^{-1}l)^2y\c y^2 = \dots = 0, \]
    which is a contradiction. Now suppose $p|k$. Then $p|l$ as $y^2\not\in pA = A^{[3]}$. Then we have
    \[py = l'py^2 = l'^2 py\c y^2 = \dots = 0,\]
    for some $l'$, which is a contradiction. Hence $y$ and $y^2$ are independent generators of $(A,+)$.
\end{proof}

Let us fix notation
\[y\c y^2 = ay + by^2,\]
\[y^2\c y = cy + dy^2\]
for $a,b,c,d\in\{1,2,\dots,p^2\}.$

\begin{lemma}
Let $A$ be a pre-Lie ring with $A^{[3]}\neq 0$ and $A^{[4]}=0$. Suppose the additive group $(A,+)\cong C_{p^2} \times C_{p^2}$ is generated by $x$ and $y$. With notation as above, we have
    $p^2|a,c$, $p|b,d$.
\end{lemma}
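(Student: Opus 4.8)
The plan is to exploit the same ``iterated multiplication lands in $A^{[6]}=0$'' mechanism already used repeatedly in this section, now applied to products built from $y$ and $y^2$. Recall $A$ is generated by $y$ as a pre-Lie ring, $(A,+)\cong C_{p^2}\times C_{p^2}$ is generated by $y$ and $y^2$ with $p^2y=p^2y^2=0$, and $y\c y^2=ay+by^2$, $y^2\c y=cy+dy^2$. Since $A^{[3]}\neq 0$ but $A^{[4]}=0$ (and by Lemma~10 of \cite{MR4353236} the factors $A^{[i]}/A^{[i+1]}$ have order $p$ for $i=1,2,3$, while $A^{[6]}=0$), every product of $4$ or more generators vanishes; in particular $y^2\c y^2=0$ and every triple product of $y$'s is a fixed generator of the cyclic group $A^{[3]}$ of order $p$.

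First I would compute $y\c(y\c y^2)$. Using right-distributivity, $y\c(ay+by^2)=a\,y^2+b\,(y\c y^2)=a\,y^2+b(ay+by^2)$, so it equals $ab\,y + (a+b^2)\,y^2$. But $y\c(y\c y^2)$ is a product of $4$ elements, hence $0$; comparing coefficients in the basis $\{y,y^2\}$ of $(A,+)/(\text{torsion})$—more precisely reading off coordinates modulo $p^2$—gives $ab\equiv 0$ and $a+b^2\equiv 0 \pmod{p^2}$. To upgrade this to $p^2\mid a$ and $p\mid b$, iterate: $y\c(y\c(y\c y^2))$ is again $0$, and expanding it one sees its $y$-coefficient is a unit multiple of $a b^{?}$ and its $y^2$-coefficient a unit multiple of $a+b^{k}$ for growing $k$; the cleanest route is to note $a+b^2\equiv 0$ forces $a\equiv -b^2$, substitute into $ab\equiv 0$ to get $b^3\equiv 0\pmod{p^2}$, hence $p\mid b$, hence $b^2\equiv 0\pmod{p^2}$ (as $p\mid b$ implies $p^2\mid b^2$), hence $a\equiv -b^2\equiv 0\pmod{p^2}$, i.e.\ $p^2\mid a$. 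Symmetrically, applying the same argument to $y^2\c y=cy+dy^2$ with left-iterated products $((y^2\c y)\c y)\c y=0$ (each bracketing by $y$ on the right being a product of more elements, hence eventually $0$) yields $c\equiv -d^{2}$ type relations and $cd\equiv 0\pmod{p^2}$, giving $p^2\mid c$ and $p\mid d$.

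The one point needing care—and the main obstacle—is justifying that reading off coefficients of basis elements is legitimate modulo the right modulus. Since $(A,+)=\Z y\oplus\Z y^2$ with both summands of order $p^2$, an identity $\alpha y+\beta y^2=0$ in $A$ forces $\alpha\equiv\beta\equiv 0\pmod{p^2}$, so the coefficient comparisons above are valid mod $p^2$; I would state this explicitly. A second subtlety: in expanding products such as $y\c(y\c y^2)$ I am using only right-distributivity, which holds in any pre-Lie ring, so no appeal to the pre-Lie identity itself is needed here—the pre-Lie relations only enter (implicitly, via earlier lemmas) in guaranteeing $A^{[4]}=0$ and the structure of $A^{[3]}$. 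I would also double-check that each intermediate product I set to zero really is a product of at least $4$ elements: $y\c(y\c y^2)$ involves $y,y,y,y$ arranged as a product of $4$ generators, hence lies in $A^{[4]}=0$; similarly for the left-normed towers from $y^2\c y$. With these points pinned down, the coefficient bookkeeping is entirely routine and I would not write it out in full.
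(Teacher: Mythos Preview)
Your argument is correct and essentially the same as the paper's: set the four-fold products $y\c(y\c y^2)$ and $(y^2\c y)\c y$ equal to zero (since $A^{[4]}=0$) and read off coefficients in the basis $\{y,y^2\}$, obtaining $ab\equiv a+b^2\equiv 0$ and $cd\equiv c+d^2\equiv 0\pmod{p^2}$, whence $p^2\mid a,c$ and $p\mid b,d$. The paper writes down all four such products while you need only two thanks to the substitution $a\equiv -b^2$; your detours through $A^{[6]}=0$ and further iteration are unnecessary here since $A^{[4]}=0$ is already a hypothesis, so you can simply drop them.
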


\begin{proof}
    We have
\[(y\c y^2)\c y = bc y +(a+bd)y^2=0,\]
\[y \c(y\c y^2) = ba y +(a+b^2)y^2=0,\]
\[y \c(y^2\c y) = ad y +(c+bd)y^2=0,\]
\[(y^2\c y)\c y = cd y +(c+d^2)y^2=0.\]

If $p^2\nmid a,$ then $p|b$ which contradicts the second equation. Hence $p^2| a$. It follows that $p^2| bd, b^2$ so $p^2| c$ and $p^2| d^2$. 
\end{proof}

Notice that the pre-Lie relations are satisfied by this structure, so it indeed defines a pre-Lie ring.

  \subsubsection{Pre-Lie rings $A$ with $A^{[4]}\neq 0$}
 In this section we assume $A\neq A^{[2]}\neq A^{[3]}\neq A^{[4]}\neq 0$. Now as $(A^{[2]},+)\leq (A,+)$ and $|A^{[2]}|=p^3$, we have $(A^{[2]},+)\cong C_p\times C_{p^2}$ and we can let $(A^{[2]},+)$ be generated by $x$ and $py$.

\begin{lemma}\label{propxina3}
 Let $A$ be a pre-Lie ring with $A^{[4]}\neq 0$. Suppose the additive group $(A,+)\cong C_{p^2} \times C_{p^2}$ is generated by $x$ and $y$ and $(A^{[2]},+)$ is generated by $x$ and $py$. Then    $x\not \in A^{[3]}$.
\end{lemma}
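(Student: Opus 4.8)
The plan is to argue by contradiction, assuming $x \in A^{[3]}$ and deriving that $A^{[4]} = 0$, contrary to hypothesis. Since $A \neq A^{[2]} \neq A^{[3]} \neq A^{[4]} \neq 0$ forces the chain of quotients $|A^{[i]}/A^{[i+1]}| = p$ for $i = 1, 2, 3$ (as recorded before this lemma, from the analogue of Lemma 10 of \cite{MR4353236}), we have $|A^{[2]}| = p^3$, $|A^{[3]}| = p^2$, $|A^{[4]}| = p$, and $(A^{[2]},+) \cong C_p \times C_{p^2}$ generated by $x$ and $py$. The key observation to exploit is that $A^{[2]}$ is generated as a pre-Lie ring (in fact as an ideal, or at least additively together with star-products) by the products $i \cdot j$ for $i, j$ ranging over a generating set of $A$, and every element of $A^{[3]}$ is a $\Z$-linear combination of triple products.

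First I would fix additive generators: since $A = A^{[1]}$ properly contains $A^{[2]}$ and $|A/A^{[2]}| = p$, and $x \in A^{[2]}$, the element $y$ together with $x$ generates $(A,+) \cong C_{p^2} \times C_{p^2}$, with $p^2 x = p^2 y = 0$. Now suppose $x \in A^{[3]}$. Then $A^{[3]}$ contains $x$; since $|A^{[3]}| = p^2$ and $x$ has additive order $p^2$, we get $A^{[3]} = \Z x$. In particular $px \in A^{[4]}$, and since $|A^{[4]}| = p$, in fact $A^{[4]} = \Z(px) = pA^{[3]}$. This already says $A^{[3]} = A^{[4]} + \Z x$ but more importantly it says $A^{[4]} = pA^{[3]}$. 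The next step is to show $A^{[2]} = A^{[3]} + (\text{something of order } p)$ and track how $A^{[3]}$ sits inside $A^{[2]} \cong C_p \times C_{p^2}$: since $x$ generates the $C_{p^2}$ factor and $A^{[3]} = \Z x$ is exactly that factor, the quotient $A^{[2]}/A^{[3]}$ is generated by the image of $py$, consistent with $|A^{[2]}/A^{[3]}| = p$. So $A^{[2]} = \Z x + \Z(py)$, $A^{[3]} = \Z x$, $A^{[4]} = \Z(px)$.

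Then I would compute $A^{[4]} = \sum_{j} A^{[j]} * A^{[4-j]} = A^{[1]} * A^{[3]} + A^{[2]} * A^{[2]} + A^{[3]} * A^{[1]}$ using the $*$-product notation (which for a pre-Lie ring just means the product $\cdot$; here $A^{[i]} * A^{[j]}$ denotes the additive span of all products $a \cdot b$ with $a \in A^{[i]}$, $b \in A^{[j]}$). Now $A^{[2]} * A^{[2]}$ involves products of elements each of which is a linear combination of $x$ and $py$; since $x \in A^{[3]}$, any product with a factor $x$ lies in $A \cdot A^{[3]} + A^{[3]} \cdot A \subseteq A^{[4]}$, and the remaining product $py \cdot py = p^2(y \cdot y) = 0$. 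So $A^{[2]} * A^{[2]} \subseteq A^{[4]}$, which is no help, but also no harm. The real point: $A^{[4]} = A * A^{[3]} + A^{[3]} * A$ (modulo what we just absorbed), and $A^{[3]} = \Z x$, so $A^{[4]}$ is generated by $\{y \cdot x, x \cdot x, x \cdot y\}$ (products of $x$ with additive generators $x, y$ of $A$) together with their $p$-multiples. But each such product lies in $A^{[2]} \cdot A + A \cdot A^{[2]}$; since $x \in A^{[3]}$, actually $x \cdot x, x \cdot y, y \cdot x \in A^{[4]} = \Z(px)$, say each equals a multiple of $px$. Feeding this back: $A^{[4]}$ is spanned by multiples of $px$ — consistent — but now $A^{[5]} = A * A^{[4]} + \cdots$ and iterating, $x \cdot (px) = p(x \cdot x) \in p A^{[4]} = \Z(p^2 x) = 0$; similarly all products into $A^{[4]}$ vanish, forcing $A^{[5]} = 0$. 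That alone is not the contradiction, so the sharper step is needed: show that the generator of $A^{[4]}$, being $px$ where $x$ itself is a triple product, is actually a product of $\geq 5$ elements, hence a contradiction with the length bounds, OR directly that $x \in A^{[3]}$ together with $x$ generating the $C_{p^2}$-part of $A^{[2]}$ forces $A^{[3]} \subseteq p A^{[2]}$ is false yet $A^{[3]} = A^{[3]} \cap A^{[2]}$ collapses.

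The cleanest line, and the one I expect the author uses: since $x \in A^{[3]}$ and $(A^{[2]},+)$ is generated by $x$ and $py$, every element of $A^{[2]}$ is of the form $kx + lpy$; hence $A^{[2]} \subseteq A^{[3]} + pA$. Then $A^{[3]} = A^{[2]}*A + A*A^{[2]} \pmod{A^{[4]}}$-type expansions give $A^{[3]} \subseteq (A^{[3]} + pA)*A + A*(A^{[3]}+pA) \subseteq A^{[4]} + p(A*A) \subseteq A^{[4]} + pA^{[2]}$. But $pA^{[2]} = p(\Z x + \Z(py)) = \Z(px)$ (as $p^2 y = 0$) $= A^{[4]}$. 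So $A^{[3]} \subseteq A^{[4]}$, whence $A^{[3]} = A^{[4]}$, contradicting $A^{[3]} \neq A^{[4]}$ — and also, via the cited Lemma 10 analogue, forcing $A^{[3]} = 0$, contradicting $A^{[4]} \neq 0$. Therefore $x \notin A^{[3]}$. The main obstacle is bookkeeping the radical-chain identity $A^{[3]} = \sum_{j=1}^{2} A^{[j]} * A^{[3-j]} = A*A^{[2]} + A^{[2]}*A$ correctly and making sure the absorption $p(A*A) \subseteq pA^{[2]} = A^{[4]}$ is airtight given the exact additive structure $C_p \times C_{p^2}$ with the specified generators; everything else is routine once that inclusion chain is set up.
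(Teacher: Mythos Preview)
Your proposal is correct, and the ``cleanest line'' you settle on at the end is essentially the paper's argument: assuming $x\in A^{[3]}$, one has $A^{[3]}=\Z x$ and $A^{[4]}=\Z(px)$, and then one checks that $A^{[3]}=A^{[2]}\cdot A + A\cdot A^{[2]}\subseteq A^{[4]}$, contradicting $A^{[3]}\neq A^{[4]}$. The only cosmetic difference is that the paper verifies this inclusion by expanding a generic product $(ax+bpy)\cdot(cx+dy)$ term by term (noting in particular that $py\cdot y=p(y\cdot y)\in pA^{[2]}=\Z(px)=A^{[4]}$), whereas you phrase the same computation via the chain $A^{[2]}\subseteq A^{[3]}+pA$ and $pA^{[2]}=A^{[4]}$; these are the same calculation in different clothing.
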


\begin{proof}

    If $x\in A^{[3]}$, then $(A^{[3]},+)$ is generated by $x$. It follows that $(A^{[4]},+)$ is generated by $px$. Notice that $py\c y \in A^{[3]}$, so $py\c y=kx$ for some $k$ divisible by $p$, so $py\c y\in A^{[4]}$. Also $x\c x, x\c y, y\c x \in A^{[4]}.$
    We have that $A^{[3]} = A^{[2]}\c A+ A\c A^{[2]}$. So elements of $A^{[3]}$ are either of the form $(ax+bpy)\c(cx+dy)$ or $(cx+dy)\c (ax+bpy)$ for some $a,b,c,d\in \Z_{p^2}.$ We have
    \[(ax+bpy)\c(cx+dy) = ac x\c x +adx\c y+bpcy\c x+bpdy\c y,\]
    where all of the terms are in $A^{[4]}$, so $A^{[2]}\c A\subseteq A^{[4]}$. Similarly,  $A\c A^{[2]}\subseteq A^{[4]}$. Hence $A^{[3]}=A^{[4]}$, which implies $A^{[3]}=0$.
\end{proof}

\begin{cor}\label{cora3}
Let $A$ be a pre-Lie ring with $A^{[4]}\neq 0$. Suppose the additive group $(A,+)\cong C_{p^2} \times C_{p^2}$ is generated by $x$ and $y$ and $(A^{[2]},+)$ is generated by $x$ and $py$. Then $(A^{[3]},+)$ is generated by $px$ and $py$, so $A^{[3]} = pA$.
\end{cor}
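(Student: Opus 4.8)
The plan is to bootstrap from the previous lemma (Lemma~\ref{propxina3}), which tells us $x\notin A^{[3]}$. Since $(A^{[3]},+)$ is a subgroup of $(A^{[2]},+)\cong C_p\times C_{p^2}$ of order $p^2$, and since it cannot contain $x$ (an element of order $p^2$ generating a cyclic direct summand), the only subgroup of order $p^2$ in $C_p\times C_{p^2}$ not containing $x$ must be the one generated by $px$ and $py$; this is exactly $pA$, since $p(\langle x,y\rangle)=\langle px, py\rangle$ and $\langle x, py\rangle$ has the same $p$-torsion picture. So the first step is a purely group-theoretic enumeration of order-$p^2$ subgroups of $A^{[2]}=\langle x, py\rangle\cong C_p\times C_{p^2}$, observing there are only two: $\langle x, py\rangle_{\text{tors}}=\langle px,py\rangle=pA$ and... actually one must be slightly careful, as $C_p\times C_{p^2}$ has more than two subgroups of order $p^2$ in general, so I would instead argue directly.

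The cleaner route: I would show $A^{[3]}\subseteq pA$ and $pA\subseteq A^{[3]}$ separately. For the inclusion $A^{[3]}\subseteq pA$: by Lemma~\ref{first}-type reasoning and the structure already established, the products $i\cdot j$ for $i,j\in\{x,y\}$ all lie in $A^{[2]}=\langle x, py\rangle$, and one checks that every element of $A^{[3]}=A^{[2]}\cdot A + A\cdot A^{[2]}$ is a $\Z$-combination of products of the form $(\text{elt of }A^{[2]})\cdot(\text{elt of }A)$; expanding such a product bilinearly, every term carries either a factor of $p$ coming from the $py$ part, or is of the form $x\cdot(\text{something})$ or $(\text{something})\cdot x$. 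To control the latter I would use that $x\in A^{[2]}$, so $x\cdot z$ and $z\cdot x$ lie in $A^{[3]}$ — wait, that's circular; instead I note $x\cdot x, x\cdot y, y\cdot x \in A^{[3]}$ automatically (they are products of two elements, but $x\in A^{[2]}$ so $x\cdot y\in A\cdot A = A^{[2]}$ and then $x\cdot y \in A^{[2]}$...). The honest version is: since $x\in A^{[2]}$, any product with $x$ as a factor on either side lies in $A^{[2]}\cdot A + A\cdot A^{[2]} = A^{[3]}$, hence modulo $A^{[3]}$ every generator of $A^{[3]}$ vanishes unless it already carries a factor $p$; iterating, $A^{[3]}\subseteq pA + A^{[4]}$, and since $A^{[4]}\subseteq pA$ (as $|A^{[4]}|=p$ and $A^{[4]}\le A^{[3]}\le A^{[2]}=\langle x,py\rangle$, and $A^{[4]}\ne\langle px\rangle$ is the one case to rule out), we get $A^{[3]}\subseteq pA$.

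For the reverse inclusion $pA\subseteq A^{[3]}$: we know $|A^{[3]}|=p^2=|pA|$ (since $(A,+)\cong C_{p^2}\times C_{p^2}$ gives $|pA|=p^2$, and Lemma~10 of \cite{MR4353236} gives $|A^{[3]}|=p^2$ under the running hypothesis $A\ne A^{[2]}\ne A^{[3]}\ne A^{[4]}\ne 0$). Combined with $A^{[3]}\subseteq pA$ and equality of cardinalities, this forces $A^{[3]}=pA$. Then $pA=\langle px, py\rangle$ is immediate.

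The main obstacle I anticipate is the step ruling out $A^{[4]}=\langle px\rangle$ (equivalently, making the inclusion $A^{[3]}\subseteq pA$ airtight rather than just $A^{[3]}\subseteq pA+A^{[4]}$): one must verify that no combination of triple products can produce an element outside $pA$. This is exactly the kind of bilinear bookkeeping done in Lemma~\ref{propxina3}, and I would mirror that argument — expand $A^{[3]}=A^{[2]}\cdot A+A\cdot A^{[2]}$ with $A^{[2]}=\langle x,py\rangle$, note every resulting monomial either has an explicit $p$ or has $x$ as an outer factor (landing back in $A^{[3]}$, so contributing nothing new mod $A^{[3]}$), and conclude by a descending-filtration / cardinality squeeze. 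Once the cardinality count $|A^{[3]}|=|pA|=p^2$ is in hand, everything collapses quickly.
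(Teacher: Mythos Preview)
Your first instinct---enumerate the order-$p^2$ subgroups of $A^{[2]}\cong C_{p^2}\times C_p$ and use Lemma~\ref{propxina3} to exclude the bad ones---is exactly the paper's (implicit) argument, and you abandoned it one step too early. You correctly observe that $C_{p^2}\times C_p$ has $p$ cyclic subgroups $\langle x+kpy\rangle$ of order $p^2$ (for $k=0,\dots,p-1$) in addition to $\langle px,py\rangle$, and that $x\notin A^{[3]}$ only kills the $k=0$ one. The missing observation is that the choice of $x$ in the setup ``we can let $(A^{[2]},+)$ be generated by $x$ and $py$'' is flexible: for any $k$, the element $x':=x+kpy$ also has order $p^2$, also satisfies $\langle x',py\rangle=A^{[2]}$ and $\langle x',y\rangle=A$, so Lemma~\ref{propxina3} applies verbatim with $x'$ in place of $x$, giving $x'\notin A^{[3]}$. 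Thus $A^{[3]}$ contains no element of order $p^2$, hence $A^{[3]}\subseteq\{a\in A:pa=0\}=\langle px,py\rangle=pA$, and $|A^{[3]}|=p^2=|pA|$ forces equality. That is the entire proof.

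Your second route has a genuine gap at the step ``iterating, $A^{[3]}\subseteq pA+A^{[4]}$''. Expanding $A^{[3]}=A^{[2]}\cdot A+A\cdot A^{[2]}$ bilinearly with $A^{[2]}=\Z x+\Z(py)$ gives
\[
A^{[3]}\subseteq \Z(x\cdot y)+\Z(y\cdot x)+pA+A^{[4]},
\]
since $x\cdot x\in A^{[4]}$ and the $py$-terms land in $pA$. But $x\cdot y$ and $y\cdot x$ are only known to lie in $A^{[3]}$; there is no mechanism here forcing them into $pA+A^{[4]}$. If you try to iterate by writing $x\equiv\delta\,y^2\pmod{A^{[3]}}$ and substituting, you get $x\cdot y\equiv\delta\,(y^2\cdot y)\pmod{A^{[4]}}$, and $y^2\cdot y$ is again just an arbitrary element of $A^{[3]}$---the argument loops. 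The ``descending filtration / cardinality squeeze'' you invoke at the end requires $A^{[3]}\subseteq pA$ as input, which is precisely what this route fails to establish.
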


\begin{lemma}\label{propa4}
Let $A$ be a pre-Lie ring with $A^{[4]}\neq 0$. Suppose the additive group $(A,+)\cong C_{p^2} \times C_{p^2}$ is generated by $x$ and $y$ and $(A^{[2]},+)$ is generated by $x$ and $py$.
  Then  $A^{[4]}=pA^{[2]}$.
\end{lemma}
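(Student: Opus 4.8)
The plan is to prove the single inclusion $pA^{[2]}\subseteq A^{[4]}$ and then conclude by a cardinality count, since under the standing hypotheses of this subsection ($A\neq A^{[2]}\neq A^{[3]}\neq A^{[4]}\neq 0$) we already know from the remarks at the start of the section that $|A^{[2]}|=p^3$, $|A^{[3]}|=p^2$ and $|A^{[4]}|=p$. Before that I would identify $pA^{[2]}$ concretely: as $(A^{[2]},+)$ is generated by $x$ and $py$ and $p^2y=0$, the subgroup $pA^{[2]}$ is generated by $px$ alone, i.e.\ $pA^{[2]}=\Z(px)$; moreover $px\neq 0$, for otherwise $A^{[2]}$ would be generated by the two elements $x$ and $py$, each of order dividing $p$, forcing $|A^{[2]}|\leq p^2$ and contradicting $|A^{[2]}|=p^3$. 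Hence $px$ has order $p$ and $|pA^{[2]}|=p$.

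For the inclusion, recall from the definition of the radical chain that $A^{[4]}=A\c A^{[3]}+A^{[2]}\c A^{[2]}+A^{[3]}\c A$, so in particular $A\c A^{[3]}\subseteq A^{[4]}$. By Corollary \ref{cora3} we have $A^{[3]}=pA$, and since the pre-Lie product is $\Z$-bilinear, $A\c A^{[3]}=A\c(pA)=p(A\c A)=pA^{[2]}$: each generator $a\c(pb)$ of the left-hand subgroup equals $p(a\c b)$, and conversely each generator $p(a\c b)$ of $p(A\c A)$ equals $a\c(pb)$. Therefore $pA^{[2]}\subseteq A^{[4]}$, and combining this with $|pA^{[2]}|=p=|A^{[4]}|$ gives $A^{[4]}=pA^{[2]}$.

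I do not expect a genuine obstacle here: the argument is simply the combination of Corollary \ref{cora3} with the cardinality data already recorded at the start of this subsection. The only points that need a moment's care are verifying $px\neq 0$ (so that $pA^{[2]}$ genuinely has order $p$ rather than being trivial, which is what makes the cardinality comparison decisive) and the elementary check that $A\c(pA)=p(A\c A)$ as subgroups, which is immediate from the $\Z$-bilinearity of the pre-Lie product; everything else is bookkeeping.
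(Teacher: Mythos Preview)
Your argument is correct, and it takes a genuinely different route from the paper. Both proofs start from the decomposition $A^{[4]}=A\c A^{[3]}+A^{[2]}\c A^{[2]}+A^{[3]}\c A$ and use Corollary~\ref{cora3} together with bilinearity to identify $A\c A^{[3]}=A^{[3]}\c A=pA^{[2]}$. From there the paper proves the remaining inclusion $A^{[2]}\c A^{[2]}\subseteq pA^{[2]}$ directly: it observes that $A^{[2]}/A^{[3]}$ is generated by $y^2$, reduces to controlling $y^2\c y^2$, and then applies the pre-Lie identity $(z\c y)\c y-z\c y^2=(y\c z)\c y-y\c(z\c y)$ with $z=y^2$ to place $y^2\c y^2$ inside $A^{[3]}\c A+A\c A^{[3]}=pA^{[2]}$. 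You instead bypass the $A^{[2]}\c A^{[2]}$ term entirely by invoking the cardinality fact $|A^{[4]}|=p$ recorded at the beginning of the subsection, matching it with your computation $|pA^{[2]}|=p$. Your approach is shorter and avoids any use of the pre-Lie identity; the paper's approach is more self-contained in that it does not appeal to the size of $A^{[4]}$, and it yields the extra structural fact $A^{[2]}\c A^{[2]}\subseteq A^{[3]}\c A+A\c A^{[3]}$ along the way.
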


\begin{proof}
We have $A^{[4]} = A\c A^{[3]}+A^{[2]}\c A^{[2]}+A^{[3]}\c A = pA^{[2]}+A^{[2]}\c A^{[2]}$.
    Notice that $A^{[2]}/A^{[3]} = \langle y^2+A^{[3]}\rangle$, as $x^2, x\c y, y\c x\in A^{[3]}$. Hence, as $x\not\in A^{[3]}$, 
    \[x +A^{[3]}= ky^2+A^{[3]},\]
    where  $p\not | k$. Hence
    \[x\c x +A^{[5]}= k^2 y^2\c y^2+ A^{[5]}.\]
    It follows that $A^{[2]}\c A^{[2]} \subseteq \F_p (y^2\c y^2)+ A^{[3]}\c A^{[2]} + A^{[2]}\c A^{[3]}$.
    For $z\in A^{[2]}$ we have the following relation in A:
    \[(z\c y)\c y-z\c(y\c y) = (y\c z)\c y-y\c(z\c y).\]
    It follows that $A^{[2]}\c A^{[2]}\subseteq A^{[3]}\c A+A\c A^{[3]} = pA^{[2]}$. Hence $A^{[4]} = pA^{[2]}.$
\end{proof}

\begin{cor}
Let $A$ be a pre-Lie ring with $A^{[4]}\neq 0$. Suppose the additive group $(A,+)\cong C_{p^2} \times C_{p^2}$ is generated by $x$ and $y$ and $(A^{[2]},+)$ is generated by $x$ and $py$. Then    $A^{[5]}=0.$
\end{cor}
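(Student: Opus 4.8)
The plan is to derive $A^{[5]} = 0$ directly from the two results just established: Corollary \ref{cora3}, which gives $A^{[3]} = pA$, and Lemma \ref{propa4}, which gives $A^{[4]} = pA^{[2]}$. First I would expand $A^{[5]}$ using the recursive definition of the strongly nilpotent chain, namely
\[A^{[5]} = \sum_{j=1}^{4} A^{[j]}\c A^{[5-j]} = A\c A^{[4]} + A^{[2]}\c A^{[3]} + A^{[3]}\c A^{[2]} + A^{[4]}\c A.\]
Substituting $A^{[3]} = pA$ and $A^{[4]} = pA^{[2]}$ into each summand, every term acquires a factor of $p$ and lands inside $pA^{[3]} = pA^{[4]}$ or $p(A^{[2]}\c A^{[2]})$; more precisely $A\c A^{[4]} = A\c(pA^{[2]}) = p(A\c A^{[2]}) = pA^{[3]}$, and symmetrically $A^{[4]}\c A = pA^{[3]}$, while $A^{[2]}\c A^{[3]} = A^{[2]}\c(pA) = p(A^{[2]}\c A) = pA^{[3]}$ and likewise $A^{[3]}\c A^{[2]} = pA^{[3]}$. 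Hence $A^{[5]} \subseteq pA^{[3]}$.

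Now $pA^{[3]} = p(pA) = p^2 A = 0$, since $(A,+) \cong C_{p^2}\times C_{p^2}$ is annihilated by $p^2$. Therefore $A^{[5]} = 0$, as claimed. The only subtlety worth spelling out is that the bilinearity (distributivity) of the pre-Lie product over the additive group is exactly what licenses pulling the scalar $p$ out of each product, e.g. $A\c(pA^{[2]}) = p(A\c A^{[2]})$; this holds because $a\c(b+c) = a\c b + a\c c$ and $(a+b)\c c = a\c c + b\c c$ let one move integer multiples across the $\c$ operation. There is essentially no obstacle here: the statement is an immediate corollary of the two preceding lemmas combined with the fact that the additive exponent of $A$ is $p^2$. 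I would present it in two or three lines, noting the chain
\[A^{[5]} = A\c A^{[4]} + A^{[2]}\c A^{[3]} + A^{[3]}\c A^{[2]} + A^{[4]}\c A \subseteq pA^{[3]} = p^2 A = 0.\]
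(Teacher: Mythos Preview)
Your argument is correct and is exactly the kind of immediate deduction the paper has in mind; the corollary is stated without proof because it follows at once from $A^{[3]}=pA$ and $A^{[4]}=pA^{[2]}$ together with $p^2A=0$, precisely via the chain you display. One cosmetic point: in the prose you write equalities such as $p(A\c A^{[2]}) = pA^{[3]}$ and $p(A^{[2]}\c A) = pA^{[3]}$, whereas strictly these are inclusions (since $A^{[3]} = A\c A^{[2]} + A^{[2]}\c A$); your final displayed line uses $\subseteq$ correctly, so this does not affect the argument.
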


\begin{lemma}\label{abelian}
Let $A$ be a pre-Lie ring with $A^{[4]}\neq 0$. Suppose the additive group $(A,+)\cong C_{p^2} \times C_{p^2}$ is generated by $x$ and $y$ and $(A^{[2]},+)$ is generated by $x$ and $py$. Then    $(A,+)$ is generated by $y, y^2$ as an abelian group.
\end{lemma}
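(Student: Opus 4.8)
The plan is to show that $y$ and $y^2$ generate $(A,+)$, which, since $|A| = p^4$ and $(A,+) \cong C_{p^2} \times C_{p^2}$, amounts to showing that the subgroup $\langle y, y^2 \rangle$ has order $p^4$; equivalently, that $y$ has additive order $p^2$ and that $y^2 \notin \langle y \rangle + pA$, so that $y$ and $y^2$ are independent generators. First I would establish that $A$ is generated by $y$ as a pre-Lie ring: by Proposition \ref{2genimpliesa3=0}, since here $A^{[3]} \neq 0$, the ring $A$ cannot be generated by two elements as a pre-Lie ring, and by the analysis preceding this lemma (in particular Corollary \ref{cora3}, $A^{[3]} = pA$, and Lemma \ref{propa4}) the element $x$ can be taken to lie in $A^{[2]}$ but not $A^{[3]}$, so the single pre-Lie generator must be congruent to a multiple of $y$ modulo $A^{[2]}$; rescaling, we may assume $y$ generates $A$ as a pre-Lie ring. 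Then every element of $A$ is a $\Z$-linear combination of $y$ and iterated products of $y$'s, and by Corollary \ref{cora3} all products of length $\geq 2$ that are not of the form $y^2$ (up to the relevant radical layer) lie in $A^{[3]} = pA$, while $A^{[2]}/A^{[3]}$ is spanned by $y^2 + A^{[3]}$ as noted in the proof of Lemma \ref{propa4}.

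Next I would run the now-standard independence argument used in the $C_p \times C_{p^3}$ and $C_{p^2}\times C_{p^2}$ sections: suppose $k y = l y^2$ in $(A,+)$ for $k,l \in \{1,\dots,p^2\}$. If $p \nmid k$, then $y = k^{-1} l\, y^2 \in A^{[2]}$, and iterating the pre-Lie multiplication by $y$ gives $y \in A^{[n]}$ for all $n$, hence $y = 0$ by nilpotency — a contradiction since $y$ generates $A$. If $p \mid k$, write $k = pk'$; then $pk' y = l y^2$. Here $y^2 \notin A^{[3]} = pA$ (otherwise $A^{[2]} = A^{[2]} \c A + A \c A^{[2]} \subseteq A^{[3]}$ would force $A^{[3]} = 0$ by the collapsing principle recorded at the start of Section \ref{cp2cp2}), so reducing $pk' y = l y^2$ modulo $pA$ forces $p \mid l$, say $l = pl'$; cancelling (using that $A^{[2]}/A^{[4]}$ has exponent $p$, or directly that $py^2$ has order $p$ by Lemma \ref{propa4} and $A^{[5]}=0$) gives $py = l' p y^2$ up to a unit, and then $py = (l')^2 p\, y \c y^2 \in A^{[4]}$, and iterating again yields $py = 0$. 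Combined with the relation, this forces the subgroup $\langle y, y^2\rangle$ to have order at most $p^3 < p^4$, contradicting the fact that $A$ is generated by $y$ as a pre-Lie ring while all higher products lie in $pA = A^{[3]}$ (so $\langle y, y^2 \rangle \supseteq \langle y \rangle + A^{[3]}$ already has order $\geq p^3$, and must in fact be all of $A$). Thus $y$ has order $p^2$ and $y^2$ is independent of $y$ modulo $pA$, so $y, y^2$ generate $(A,+)$.

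The main obstacle I anticipate is the bookkeeping in the case $p \mid k$: one must be careful that cancelling the factor $p$ is legitimate, which relies on knowing the additive orders of $y$, $y^2$, $py^2$ precisely — and these follow from Corollary \ref{cora3} and Lemma \ref{propa4} ($A^{[3]} = pA$ has order $p^3$, $A^{[4]} = pA^{[2]}$ has order $p$, $A^{[5]} = 0$) together with $(A,+) \cong C_{p^2}\times C_{p^2}$. The rest is the familiar "multiply by $y$ and descend the radical chain" trick, which is clean because $A$ is generated by $y$ as a pre-Lie ring and is nilpotent. I would present the argument compactly, citing Proposition \ref{2genimpliesa3=0}, Corollary \ref{cora3}, and Lemma \ref{propa4} for the structural inputs and only spelling out the two-case independence computation.
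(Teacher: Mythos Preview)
Your proposal is correct and follows essentially the same route as the paper: both reduce to showing that $ky = ly^2$ forces $k=l=0$, splitting into the cases $p\nmid k$ (iterate to push $y$ down the radical chain until $y=0$) and $p\mid k$ (use $y^2\notin pA=A^{[3]}$ to get $p\mid l$, then iterate to obtain $py=0$, contradicting that $y$ has additive order $p^2$). Your write-up supplies more surrounding justification than the paper's terse version and rightly flags the bookkeeping in the $p\mid k$ case; note that the cleanest contradiction there is simply that $y$, being one of the chosen generators of $(A,+)\cong C_{p^2}\times C_{p^2}$, has order $p^2$, so $py=0$ is immediately impossible---your detour through comparing $|\langle y,y^2\rangle|$ is unnecessary.
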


\begin{proof}
    Suppose
    \[ky = ly^2.\]
    Further, suppose $p\nmid k$. Then we have
    \[y = k^{-1}ly^2 = (k^{-1}l)^2y\c y^2 = \dots = 0, \]
    which is a contradiction. Now suppose $p|k$. Then $p|l$ as $y^2\not\in pA = A^{[3]}$. Then we have
    \[py = l'py^2 = l'^2 py\c y^2 = \dots = 0,\]
    for some $l'$, which is a contradiction. Hence $y$ and $y^2$ are independent generators of $(A,+)$.
\end{proof}

Let us fix notation
\[y\c y^2 = ay +by^2,\]
\[y^2\c y = cy+dy^2\]
for $a,b,c,d\in \{1,2,\dots,p^2\}$.
Then $p|a,b,c,d$ as $y\c y^2, y^2\c y \in pA$. It follows that
\[(y\c y^2)\c y= y\c(y\c y^2)= ay^2,\]
\[(y^2\c y)\c y= y\c(y^2\c y)= cy^2.\]

\begin{lemma}\label{relat}
Let $A$ be a pre-Lie ring with $A^{[4]}\neq 0$. Suppose the additive group $(A,+)\cong C_{p^2} \times C_{p^2}$ is generated by $x$ and $y$ and $(A^{[2]},+)$ is generated by $x$ and $py$. With notation as above, we have
    $y^2\c y^2 = (2c-a) y^2$.
\end{lemma}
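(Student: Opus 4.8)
## Proof proposal

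The plan is to compute $y^2 \c y^2$ by expressing it via the pre-Lie relation applied to elements of $A^{[2]}$, exploiting the fact that $A^{[2]}/A^{[3]}$ is one-dimensional and spanned by $y^2 + A^{[3]}$. The key observation, already used in the proof of Lemma \ref{propa4}, is that for $z\in A^{[2]}$ we have the relation $(z\c y)\c y - z\c(y\c y) = (y\c z)\c y - y\c(z\c y)$; combined with $A^{[2]}\c A^{[2]} \subseteq pA^{[2]}$ and $A^{[5]}=0$, this controls all the relevant products modulo higher terms. Since $x + A^{[3]} = k y^2 + A^{[3]}$ with $p\nmid k$, and everything in sight lives in $pA^{[2]} = A^{[4]}$ (which is killed by $p$), the element $y^2\c y^2$ is determined once we know $(z\c y)\c y$ and $z\c(y\c y)$ type expressions for $z = y^2$.

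First I would set $z = y^2$ in the relation $(z\c y)\c y - z\c(y\c y) = (y\c z)\c y - y\c(z\c y)$. Using the fixed notation $y\c y^2 = ay+by^2$ and $y^2\c y = cy+dy^2$, together with $p\mid a,b,c,d$ and the already-derived identities $(y\c y^2)\c y = y\c(y\c y^2) = ay^2$ and $(y^2\c y)\c y = y\c(y^2\c y) = cy^2$, I would expand each of the four terms. The left side gives $(y^2\c y)\c y - y^2\c(y\c y) = cy^2 - y^2\c y^2$, since $y\c y = y^2$. The right side gives $(y\c y^2)\c y - y\c(y^2\c y) = ay^2 - cy^2$. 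Equating yields $cy^2 - y^2\c y^2 = ay^2 - cy^2$, hence $y^2\c y^2 = (2c - a)y^2$, as claimed.

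The main subtlety to verify carefully is that $y\c y = y^2$ exactly (not merely modulo higher terms) — this is just notation, so it is fine — and that $y^2\c(y\c y) = y^2\c y^2$ with no stray lower-order correction; this holds because $y\c y = y^2$ is a genuine equality of elements, not a congruence. A second point needing a line of justification is that all terms $ay^2, cy^2, y^2\c y^2$ indeed lie in $A^{[4]} = pA^{[2]}$ and that $A^{[5]} = 0$, so that no further products intervene when we expand $(y^2\c y)\c y$ etc.; this was established in Lemma \ref{propa4} and its corollary. I do not anticipate a genuine obstacle here — the statement is essentially a direct consequence of one instance of the pre-Lie identity once the bookkeeping of which products vanish is in place; the only place to be cautious is sign/coefficient tracking in the four expansions.
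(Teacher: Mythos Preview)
Your proposal is correct and follows essentially the same approach as the paper: the paper's proof simply states that the result follows from the relation $(y^2\c y)\c y-y^2\c y^2 =(y\c y^2)\c y -  y\c(y^2\c y)$, which is exactly the pre-Lie identity with $z=y^{2}$ that you spell out, together with the already-computed values $(y^{2}\c y)\c y=cy^{2}$, $(y\c y^{2})\c y=ay^{2}$, and $y\c(y^{2}\c y)=cy^{2}$.
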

\begin{proof}
    Follows from the relation
    \[(y^2\c y)\c y-y^2\c y^2 =(y\c y^2)\c y -  y\c(y^2\c y). \]
\end{proof}

\begin{lemma}
    With notation as above, not both $a$ and $c$ are zero. Further, if $a,c\neq 0$, then $a=\alpha c,$ $b=\alpha d$ for some $p\nmid \alpha$.
\end{lemma}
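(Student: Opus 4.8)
The plan is to use the structure already extracted: $y, y^2$ generate $(A,+)$, $A^{[3]}=pA$, $A^{[4]}=pA^{[2]}$, $A^{[5]}=0$, together with the normal forms $y\c y^2 = ay+by^2$, $y^2\c y = cy+dy^2$ with $p\mid a,b,c,d$, and the derived identities $(y\c y^2)\c y = y\c(y\c y^2)=ay^2$, $(y^2\c y)\c y = y\c(y^2\c y)=cy^2$, and $y^2\c y^2=(2c-a)y^2$ from Lemma~\ref{relat}.

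First I would prove that not both $a$ and $c$ are zero. Suppose $a=c=0$. Then the three displayed quadruple products $(y\c y^2)\c y$, $y\c(y\c y^2)$, $(y^2\c y)\c y$, $y\c(y^2\c y)$ all vanish, and by Lemma~\ref{relat} also $y^2\c y^2=0$. Since $y,y^2$ generate $(A,+)$ and $A^{[2]}=\langle x,py\rangle$ is spanned (mod $A^{[3]}=pA$) by $y^2$, every element of $A^{[3]}=A^{[2]}\c A+A\c A^{[2]}$ is an additive combination of products of $y$'s and $y^2$'s with at least three factors in total; expanding bilinearly, each such product reduces — using $p\mid a,b,c,d$ so that $py\c y^2$ etc. already sit in $A^{[4]}=pA^{[2]}$, and using $a=c=0$ to kill the genuine triple products — to show $A^{[4]}=pA^{[2]}$ is in fact $0$. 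But $A^{[4]}\neq 0$ by hypothesis, a contradiction. (The one point requiring care is bookkeeping: one must check that every length-$4$ associator built from $y$ and $y^2$ is either zero by $a=c=0$, or lies in $pA^{[2]}\c A + \cdots$ and hence already vanishes; I would organise this by noting $A^{[4]}=\F_p(y^2\c y^2)+A^{[3]}\c A^{[2]}+A^{[2]}\c A^{[3]}$ as in the proof of Lemma~\ref{propa4}, and the latter two summands lie in $pA\c pA=0$ since $A^{[3]}=pA$ and $p^2A=0$.)

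Next, assuming $a,c\neq 0$, I would show $(a,b)$ and $(c,d)$ are proportional. Consider the element $u:=c\,(y\c y^2)-a\,(y^2\c y)=(cb-ad)y^2\in A^{[3]}$. Right-multiplying by $y$ and using $(y\c y^2)\c y=ay^2$, $(y^2\c y)\c y=cy^2$ gives $u\c y = c\cdot ay^2 - a\cdot cy^2 = 0$. On the other hand $u=(cb-ad)y^2$, so $(cb-ad)(y^2\c y)=0$, i.e. $(cb-ad)c y + (cb-ad)d y^2=0$ in $(A,+)\cong C_{p^2}\times C_{p^2}$ with $y,y^2$ independent; since $p\mid c,d$ this forces $p^2\mid (cb-ad)$. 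As $p\mid b,d$ we have $p^2\mid cb$ automatically only if $p^2\mid c$, which need not hold; so instead I extract from $p^2\mid(cb-ad)$, together with $p\mid a,c$ (write $a=pa'$, $c=pc'$ with $c'$ a unit since $c\neq 0$ and $p^2\nmid c$... — here one must first observe $p^2\nmid a$ and $p^2\nmid c$, which follows because $y\c y^2=ay+by^2$ must not lie in $A^{[4]}=pA^{[2]}$ in the right way, or more simply because $(y\c y^2)\c y=ay^2\neq 0$ would fail), that $p\mid (bc'-a'd)$, hence $b/d\equiv a/c$ after clearing the unit; setting $\alpha$ to be the unit with $a=\alpha c$ then yields $b=\alpha d$ from the same divisibility.

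I expect the main obstacle to be the divisibility bookkeeping in the second part: pinning down exactly why $p^2\nmid a$ and $p^2\nmid c$ (so that $a',c'$ behave like units modulo $p$), and then converting $p^2\mid (cb-ad)$ into the clean proportionality $b=\alpha d$ in $C_{p^2}\times C_{p^2}$. The cleanest route is probably to work modulo $A^{[4]}=pA^{[2]}$ first to get proportionality of the "leading terms", then lift: since $A^{[4]}\neq0$ is one-dimensional over $\F_p$ and spanned by $py^2$, the relation $u\c y=0$ read in $A^{[4]}$ pins the $\F_p$-ratio, and a second application of the pre-Lie identity $(z\c y)\c y - z\c(y\c y)=(y\c z)\c y - y\c(z\c z)$ with $z=y^2$ (already used in Lemma~\ref{relat}) removes the ambiguity at the next level. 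I would double-check the nonvanishing claims against the standing hypothesis $A^{[4]}\neq0$ before writing the final form.
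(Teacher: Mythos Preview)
For the first claim your contradiction argument is sound, though more computational than necessary. The paper argues structurally: $A^{[3]}/A^{[4]}$ has order $p$ and is generated by the images of $y\c y^2$ and $y^2\c y$ (the remaining generators of $A^{[3]}$, namely $y^2\c y^2$ and $py^2$, already sit in $A^{[4]}=pA^{[2]}$), so at least one of these two images must be nonzero. Since $y\c y^2=ay+by^2\in A^{[4]}=\Z(py^2)$ exactly when $a=0$, and likewise for $c$, this is precisely the assertion.

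For the second claim your approach does not work. The element $u:=c\,(y\c y^2)-a\,(y^2\c y)$ is identically zero: $p\mid a,c$ while $y\c y^2,\,y^2\c y\in A^{[3]}=pA$, so $u\in p\cdot pA=p^2A=0$. Hence $u\c y=0$ is vacuous, and the equation $u=(cb-ad)y^2$ yields only $p^2\mid cb-ad$, which is automatic from $p\mid a,b,c,d$. The fixes you propose---reading $u\c y=0$ inside $A^{[4]}$, or re-applying the associator identity with $z=y^2$---collapse for the same reason: every pre-Lie relation among four or more factors of $y$ and $y^2$ lands in $p^2A=0$ and therefore imposes no constraint. The paper bypasses computation entirely and again uses the filtration: once both $y\c y^2$ and $y^2\c y$ lie in $A^{[3]}\setminus A^{[4]}$, the fact that $|A^{[3]}/A^{[4]}|=p$ forces them to be $\F_p$-proportional, and this proportionality with unit $\alpha$ is what is recorded as $a=\alpha c$, $b=\alpha d$. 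The key idea you are missing is to work in the one-dimensional quotient $A^{[3]}/A^{[4]}$ rather than to manufacture relations in $A$ itself.
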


\begin{proof}
    At least one of $y\c y^2$ and $y^2\c y$ is in $A^{[3]}$. If both are in $A^{[3]}$, then for some $p\nmid\alpha$ we have $y\c y^2=\alpha y^2\c y$.
\end{proof}

Notice that the only pre-Lie relation to be checked is the one in Lemma \ref{relat}, and it is satisfied by definition. Therefore, this structure gives a well-defined pre-Lie ring.

\section{Summary}

In this section we collect the results using the following notation. Suppose the additive group of the pre-Lie ring $A$ is generated by some basis $\{x_1, x_2, \dots , x_n\}$. Then an element of $A$ can be written as $\sum_{i=1}^n a_i x_i$ for some coefficients $a_i\in \Z$.   We introduce notation 
\[[a_1, a_2, \dots, a_n][b_1, b_2, \dots, b_n] = [c_1, c_2, \dots,c_n]\]
to mean that for coefficients $a_i, b_i, c_i \in \Z$ we have 
\[(\sum_{i=1}^n a_i x_i)\c (\sum_{i=1}^n b_i x_i) = \sum_{i=1}^n c_i x_i.\]

\begin{thm}
    Suppose $A$ is a nilpotent pre-Lie ring of cardinality $p^4$. We have the following possibilities:
    \begin{enumerate}
        \item If $(A,+) \cong C_p^4$, $A^{[3]}\neq 0$ and $A^{[4]}=0$, then for $i,j,k,l,m,n,r,s,a,b,c,d,$ $e,f,g,h\in \F_p$ we have
        \[[i,j,k,l][m,n,r,s] = [0,0,ima+inb+jmc+jnd, irf+jrh+kme+kng],\]
        where $a,b,c,d$ are not all zero, $e,f,g,h$ are not all zero, $cg-bh=bg-df$ and $be-cf=ce-ah$.
        \item If $(A,+) \cong C_p^4$ is generated by $2$ elements as a pre-Lie ring, $A^{[2]}\neq 0$ and $A^{[3]}=0$,  then for $i,j,k,l,m,n,r,s,\alpha_{uv}, \beta_{uv}\in \F_p$ we have
        \begin{align*}
            [i,j,k,l][m,n,r,s] = [&0,0,im\alpha_{xx}+in\alpha_{xy}+jm\alpha_{yx}+jn\alpha_{yy}, im\beta_{xx}+\\
            &in\beta_{xy}+jm\beta_{yx}+jn\beta_{yy}],
        \end{align*}
        where at least two of the pairs $(\alpha_{ij}, \beta_{ij})$ do not equal $(0,0)$.
         \item If $(A,+) \cong C_p^4$ is generated by $3$ elements as a pre-Lie ring, $A^{[2]}\neq 0$ and $A^{[3]}=0$,  then for $i,j,k,l,m,n,r,s, \beta_{uv}\in \F_p$ we have
        \[[i,j,k,l][m,n,r,s] = [0,0,0, im\beta_{xx}+in\beta_{xy}+jm\beta_{yx}+jn\beta_{yy}],\]
        where at least one $\beta_{uv}\neq 0$.

        \item If $(A,+) \cong C_p\times C_{p^3}$ and $A^{[3]}\neq 0$, then for $i,k,a,c,e,g\in \{1,2,\dots, p^2\}, b\in \F_p$ we have
        \[[i,j][k,l] = [ika+ilc+jke+jlg, ikb],\]
        where $p^2\mid c,e,g$, $p\mid a$ and $p^3\mid bg$.
        \item If $(A,+) \cong C_p\times C_{p^3}$ is generated by $1$ element as a pre-Lie ring, $A^{[2]}\neq 0$ and $A^{[3]}=0$, then for $a\in \{1,2,\dots, p^3\}$ such that $p^2\mid a$ we have
        \[[i,j][k,l] = [ika,0].\]
        \item If $(A,+) \cong C_p\times C_{p^3}$ is generated by $2$ elements as a pre-Lie ring, $A^{[2]}\neq 0$ and $A^{[3]}=0$, then for $a,c,e,g\in \{1,2,\dots, p^3\}$ such that $p^2\mid a,c,e,g$ we have
        \[[i,j][k,l] = [ika+ilc+jke+jlg, 0].\]

        \item If $(A,+) \cong C_{p^2}\times C_{p^2}$, and $A^{[4]}\neq 0$, then for $i,j,k,l,c,d,e,f,h\in \{1,2,\dots, p^2\}$ we have
        \[[i,j][k,l] = [2ikd-ikf+ilc+jke+jl, ild+jkf+jlh],\]
        where $p\mid c,d,e,f,h$. 
        \item  If $(A,+) \cong C_{p^2}\times C_{p^2}$, $A^{[4]} =0$ and $ A^{[3]}\neq 0$, then for $i,j,k,l,c,e,g,h\in \{1,2,\dots, p^2\}$ we have
        \[[i,j][k,l] = [ilc+jke+jlg,jlh],\]
        where $p\mid c,e,h$ and $p\nmid g$. 
        \item If $(A,+) \cong C_{p^2}\times C_{p^2}$ is generated by $1$ element as a pre-Lie ring, $A^{[2]}\neq 0$ and $A^{[3]}=0$, then for $a,b,\alpha\in \{1,2,\dots, p^2\}$ such that $p\nmid b$ we have
        \[[i,j][k,l] = [ika+il\alpha a+jk\alpha a+jl\alpha^2 a, ikb+il\alpha b+jk\alpha b+jl\alpha^2 b].\]
        \item If $(A,+) \cong C_{p^2}\times C_{p^2}$ is generated by $2$ elements as a pre-Lie ring, $A^{[2]}\neq 0$ and $A^{[3]}=0$, then for $a,b,c,d,e,f,g,h \in \{1,2,\dots, p^2\}$, not all zero, such that $p\nmid a,b,c,d,e,f,g,h$ we have
        \[[i,j][k,l] = [ika+ilc+jke+jlg, ikb+ild+jkf+jlh].\]
        \item $A$ is one of the pre-Lie rings described in \cite{MR4353236}.
    \end{enumerate}
\end{thm}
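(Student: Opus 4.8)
The plan is to assemble the final theorem as a bookkeeping exercise: it is not a new result but a consolidation of the propositions and lemmas proved in Sections~\ref{cp4}, \ref{cpcp3}, and \ref{cp2cp2}, organised according to the additive group $(A,+)$ and the length of the lower central-type series $A^{[i]}$. First I would observe that, by the structure theory recalled at the start of each subsection, a nilpotent pre-Lie ring of cardinality $p^4$ has additive group isomorphic to one of $C_p^4$, $C_p\times C_{p^3}$, $C_{p^2}\times C_{p^2}$, $C_{p^4}$, $C_p^2\times C_{p^2}$; the cyclic case $C_{p^4}$ and the one-generator $C_p^4$ case are covered by \cite{MR4353236} (this is item~11), and the case $C_p^2\times C_{p^2}$ is subsumed because such a ring, being generated additively by elements of order $p$, $p$, $p^2$, still reduces to the analysis already carried out — or more precisely, I would note that the cases listed exhaust the possibilities that arise, citing that \cite{MR4353236} handles everything not explicitly written out.

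Next I would go case by case. For each additive group I split according to whether $A^{[2]}=0$, $A^{[2]}\neq 0$ with $A^{[3]}=0$, $A^{[3]}\neq 0$ with $A^{[4]}=0$, or $A^{[4]}\neq 0$, and within each such stratum according to the number of pre-Lie generators. The trivial case $A^{[2]}=0$ gives the zero multiplication and is silently absorbed. Then I would cite: Proposition~\ref{gens} and the following proposition for items~2 and~3; Proposition~\ref{2el} for item~1; Propositions~\ref{a3=0cp3} and the subsequent ones, together with Lemmas~\ref{first}--\ref{b=0} and the final proposition of Section~\ref{cpcp3}, for items~4, 5, 6; and the chain of results Proposition~\ref{2genimpliesa3=0} through Lemma~\ref{relat} and the concluding lemma of Section~\ref{cp2cp2} for items~7, 8, 9, 10. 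In each instance the only real content is to rewrite the structure constants found in the relevant statement in the uniform bracket notation $[a_1,\dots,a_n][b_1,\dots,b_n]=[c_1,\dots,c_n]$ and to check that the constraints (divisibility conditions, the pre-Lie relations recorded as the displayed equations $cg-bh=bg-df$ etc.) translate correctly. For example, for item~1 one takes the basis $\{x,y,z,w\}$ of Proposition~\ref{2el}, writes $x\c y=\alpha_{xy}z+\beta_{xy}w$ and so on, expands $(\sum a_ix_i)\c(\sum b_ix_i)$ bilinearly using that $z,w\in A^{[2]}$ and $z\c w$-type products vanish, and matches coefficients; the relations in part~\ref{relations} of that proposition become exactly the stated identities among $a,\dots,h$ after renaming.

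The one genuine point requiring care — and the step I expect to be the main obstacle — is verifying that the case list is \emph{complete and non-overlapping}: that every nilpotent pre-Lie ring of cardinality $p^4$ falls under exactly one of items~1--11, with no ring left out and no spurious ring introduced by a structure-constant description that fails the pre-Lie identity. Completeness requires checking that the enumeration over (additive group) $\times$ (length of the $A^{[i]}$ series) $\times$ (number of generators) is exhaustive, which follows from the additive classification of abelian groups of order $p^4$ together with the series-length bounds $A^{[6]}=0$ (resp.\ $A^{[4]}=0$ in the $C_p^4$ case) established earlier; I would also need to remark that the residual possibilities not written out — cyclic additive group, additive group $C_p^2\times C_{p^2}$, and one-generator $C_p^4$ — are precisely what \cite{MR4353236} disposes of, so they are collected into item~11. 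Soundness (each displayed multiplication really is pre-Lie) is guaranteed because in every source proposition we verified the pre-Lie relations, so here I only restate that verification. I would close by noting that a pre-Lie ring appearing in two items would have to have two incompatible $(A,+)$ or series profiles, which is impossible, so the classification is a genuine partition.
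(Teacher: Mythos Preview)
Your approach is essentially the paper's own: the theorem is presented there as a summary with no separate proof, and your plan to cite the propositions of Sections~\ref{cp4}--\ref{cp2cp2} and rewrite their structure constants in the bracket notation is exactly what is being done implicitly.

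One correction, however: your handling of item~11 is off. The reference \cite{MR4353236} classifies only one-generator nilpotent pre-Lie \emph{algebras} over $\F_p$, i.e.\ those with additive group $C_p^4$; it does not cover the cyclic additive group $C_{p^4}$ nor the additive group $C_p^2\times C_{p^2}$. So your sentence ``the cyclic case $C_{p^4}$ and the one-generator $C_p^4$ case are covered by \cite{MR4353236}'' is half wrong, and your attempt to fold $C_p^2\times C_{p^2}$ into item~11 has no textual support. The paper itself simply does not treat those two additive groups in this theorem (cyclic braces are handled elsewhere via Rump's classification, and $C_p^2\times C_{p^2}$ is not discussed), so rather than manufacturing an argument that item~11 absorbs them, you should either note that the theorem's enumeration tracks exactly the cases analysed in the body of the paper, or flag the omission explicitly.
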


\cleardoublepage

\bibliographystyle{acm}
\bibliography{bibliography}

\end{document}